\documentclass[11pt]{amsart}
\usepackage[margin=35mm]{geometry}
\usepackage{amsmath,amssymb}
\usepackage{amsthm}

\newtheorem{thm}{Theorem}[section]
\newtheorem{prop}{Proposition}[section]
\newtheorem{lem}{Lemma}[section]
\newtheorem{cor}{Corollary}[section]

\newtheorem{ex}{Example}[section]
\newtheorem{rem}{Remark}[section]
 
\begin{document}

\title{On the shadowing and limit shadowing properties}
\author{Noriaki Kawaguchi}
\subjclass[2010]{37C50; 54H20; 37B20}
\keywords{limit shadowing property; shadowing property; non-wandering set; equicontinuity}
\address{Graduate School of Mathematical Sciences, The University of Tokyo, 3-8-1 Komaba Meguro, Tokyo 153-8914, Japan}
\email{knoriaki@ms.u-tokyo.ac.jp}

\maketitle

\markboth{NORIAKI KAWAGUCHI}{On the shadowing and limit shadowing properties}

\begin{abstract}
We study the relation between the shadowing property and the limit shadowing property. We prove that if a continuous self-map $f$ of a compact metric space has the limit shadowing property, then the restriction of $f$ to the non-wandering set satisfies the shadowing property. As an application, we prove the equivalence of the two shadowing properties for equicontinuous maps.  
\end{abstract}

\section{Introduction}

Shadowing has been the subject of much interest in the qualitative study of dynamical systems \cite{AH, P1}, and various shadowing properties have been defined in the course of such studies so far. The limit shadowing property introduced in \cite{ENP} is one of the variants of the shadowing property, which focuses on the possibility of asymptotic shadowing of pseudo orbits whose one-step errors are converging to zero, and it is a subject of ongoing research, see \cite{BGOR, C, CK, KKO, KO, L, MO, O, P2, S}.

In \cite{P2, S}, various limit shadowing properties are examined in relation to the notion of hyperbolicity and stability. The set of $\Omega$-stable diffeomorphisms of a smooth closed manifold is characterized as the $C^1$-interior of the set of diffeomorphisms satisfying the limit shadowing property \cite{P2}. The s-limit shadowing property is a stronger property than the limit shadowing property defined in combination with the shadowing property. It has been proved to be a $C^0$-dense property on the space of continuous self-maps (resp. continuous surjections) of a compact topological manifold \cite{MO}. The two-sided limit shadowing property of homeomorphisms is a bilateral version of the limit shadowing property, and its consequences are given, for example, in \cite{C, CK, L, O}. It is, in fact, a much stronger property than the limit shadowing property and a sufficient condition for chaos \cite{L,O}. In \cite{BGOR}, the limit shadowing property is exploited to characterize the $\omega$-limit sets in topologically hyperbolic systems in terms of the notions such as internal chain transitivity. Furthermore, in \cite{KKO, KO}, the limit shadowing property is studied together with other types of shadowing properties like the average shadowing property (ASP) and the asymptotic average shadowing property (AASP). AASP is a stochastic version of the limit shadowing property, yet there is a large class of homeomorphisms satisfying both of the shadowing and limit shadowing properties but not AASP \cite{KO}.

First, we give the formal definitions of the standard and limit shadowing properties. Throughout this paper, we deal with a continuous map $f\colon X\to X$ on a compact metric space $(X,d)$. For $\delta>0$, a sequence $(x_i)_{i\ge0}$ of points in $X$ is a {\em $\delta$-pseudo orbit} of $f$ if $d(f(x_i),x_{i+1})\le\delta$ for all $i\ge0$.  Then, for given $\epsilon>0$, a $\delta$-pseudo orbit  $(x_i)_{i\ge0}$ is said to be {\em $\epsilon$-shadowed} by $x\in X$ if $d(x_i,f^i(x))\leq \epsilon$ for all $i\ge 0$. We say that $f$ has the {\em shadowing property} if for any $\epsilon>0$, there is $\delta>0$ such that every $\delta$-pseudo orbit of $f$ is $\epsilon$-shadowed by some point of $X$. A sequence $(x_i)_{i\ge0}$ of points in $X$ is a {\em limit pseudo orbit} of $f$ if $\lim_{i\to\infty}d(f(x_i),x_{i+1})=0$. We say that $f$ has the {\em limit shadowing property} if for any limit pseudo orbit  $(x_i)_{i\ge0}$ of $f$, there is $y\in X$ such that $\lim_{i\to\infty}d(x_i,f^i(y))=0$, and such $y$ is called a {\em limit shadowing point} of $(x_i)_{i\ge0}$.

An interesting problem is to study the relation between the (standard) shadowing property and the limit shadowing property. In \cite{P1}, an example of circle homeomorphism (or diffeomorphism) satisfying the limit shadowing property but not the shadowing property is given. By taking the direct product of such diffeomorphisms, we obtain diffeomorphisms of arbitrary dimensional torus with the limit shadowing property but without the shadowing property. We also give such a simple example below.

\begin{ex}
\normalfont
Let $X=\{0,1,2\}\cup\{s_n\colon n\in\mathbb{Z}\}\cup\{t_n\colon n\in\mathbb{Z}\}$, where $(s_n)_{n\in\mathbb{Z}}$ and $(t_n)_{n\in\mathbb{Z}}$ are sequences of real numbers satisfying the following properties
\begin{itemize}
\item[(1)] $s_n<s_{n+1}$ and $t_n<t_{n+1}$ for all $n\in\mathbb{Z}$,
\item[(2)] $\lim_{n\to-\infty}s_n=0$, $\lim_{n\to+\infty}s_n=\lim_{n\to-\infty}t_n=1$, and $\lim_{n\to+\infty}t_n=2$.
\end{itemize}
Then, $X$ is a compact subset of $[0,2]$. Define $f\colon X\to X$ by
\begin{itemize}
\item[(3)] $f(y)=y$ for $y\in\{0,1,2\}$,
\item[(4)] $f(s_n)=s_{n+1}$ and $f(t_n)=t_{n+1}$ for all $n\in\mathbb{Z}$.
\end{itemize}
Then, $f$ is an expansive homeomorphism. For any limit pseudo orbit $(x_i)_{i\ge0}$ of $f$, it is easy to see that $\lim_{i\to\infty}x_i=y$ for some $y\in\{0,1,2\}$, and this implies that $\lim_{i\to\infty}d(x_i,f^i(y))=0$ since $y$ is a fixed point. Hence, $f$ has the limit shadowing property, and it is also easy to see that $f$ does not have the shadowing property. Let $g=f^\mathbb{N}\colon X^\mathbb{N}\to X^\mathbb{N}$ be the direct product of countably many copies of $f$. For any continuous map $h\colon Y\to Y$ of a compact metric space, the direct product $h^\mathbb{N}\colon Y^\mathbb{N}\to Y^\mathbb{N}$ satisfies the (limit) shadowing property iff so does $h$. Hence, $f$ is a homeomorphism of a Cantor space satisfying the limit shadowing property but not the shadowing property. We have $\Omega(g)=\{0,1,2\}^\mathbb{N}=Fix(g)$, a Cantor space. Since $Fix(g)$ is uncountable, $g$ cannot be expansive.
\end{ex}

Also in \cite{P1}, it is proved that for circle homeomorphisms, the shadowing property always implies the limit shadowing property, and the same implication holds true for c-expansive maps including expansive homeomorphisms (see \cite{BGO, BGOR, LS}). It is rather difficult to construct a continuous map satisfying the shadowing property but not the limit shadowing property, but in \cite{GOP}, such an example is given, while the equivalence of the two shadowing properties is proved for a certain class of interval maps.

The above facts may indicate that the limit shadowing property is weaker than the shadowing property at least intuitively. However, in \cite{KKO}, as a (partial) converse, it is proved that if a continuous map with the limit shadowing property is chain transitive, then it satisfies the shadowing property. As the main result of this paper, through a generalization of the result for the chain recurrent case (Lemma 2.1), we prove a basic relation between the two shadowing properties. To state the result, we give some definitions and notations.

Given a continuous map $f\colon X\to X$, a finite sequence of points $(x_i)_{i=0}^{k}$ in $X$  (where $k$ is a positive integer) is called a {\em $\delta$-chain} of $f$ if $d(f(x_i),x_{i+1})\le\delta$ for every $0\le i\le k-1$. We say that $f$ is {\em chain transitive} if for any $x,y\in X$ and $\delta>0$, there is a $\delta$-chain $(x_i)_{i=0}^{k}$ of $f$ such that $x_0=x$ and $x_k=y$. A $\delta$-chain $(x_i)_{i=0}^{k}$ of $f$ is said to be a {\em $\delta$-cycle} of $f$ if $x_0=x_k$, and a point $x\in X$ is a {\em chain recurrent point} for $f$ if for any $\delta>0$, there is a $\delta$-cycle $(x_i)_{i=0}^{k}$ of $f$ with $x_0=x_k=x$. We denote by $CR(f)$ the set of chain recurrent points for $f$. A point $x\in X$ is said to be {\em minimal} if the restriction of $f$ to the orbit closure $\overline{O_f(x)}=\overline{\{f^n(x)\colon n\ge0\}}$ is minimal, and {\em non-wandering} if for every neighborhood $U$ of $x$, we have $f^n(U)\cap U\ne\emptyset$ for some $n>0$. We denote by $M(f)$ (resp. $\Omega(f)$) the set of minimal (resp. non-wandering) points for $f$. Note that $M(f)\subset\Omega(f)\subset CR(f)$.

Now, we state the theorem.

\begin{thm}
Let $f\colon X\to X$ be a continuous map with the limit shadowing property. Then, $CR(f)=\Omega(f)=\overline{M(f)}$, and $f|_{\Omega(f)}\colon\Omega(f)\to\Omega(f)$ satisfies the shadowing property.
\end{thm}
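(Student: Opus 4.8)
The plan is to treat the topological identity $CR(f)=\Omega(f)$ separately, obtaining it directly from the limit shadowing property, and then to deduce the assertions about $\overline{M(f)}$ and about shadowing on $\Omega(f)$ by invoking Lemma~2.1 for the restriction $f|_{\Omega(f)}$. For the identity: $\overline{M(f)}\subseteq\Omega(f)\subseteq CR(f)$ holds in general and $\Omega(f)$ is closed, so it suffices to prove $CR(f)\subseteq\Omega(f)$. Given $x\in CR(f)$, for each $n\ge1$ I would fix a $(1/n)$-cycle of $f$ based at $x$, and concatenate these cycles, the one for $n=1$ first, then $n=2$, and so on. Since the admissible errors $1,1/2,1/3,\dots$ decrease to $0$ along the concatenation, the result is a limit pseudo orbit of $f$, and by construction it equals $x$ at an increasing unbounded sequence of times $m_1<m_2<\cdots$. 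If $y$ is a limit shadowing point of it, then $f^{m_j}(y)\to x$, so $x$ lies in the $\omega$-limit set $\omega(y)$; since $\omega(y)\subseteq\Omega(f)$ always, $x\in\Omega(f)$. This gives $CR(f)=\Omega(f)$, which I denote $Y$.

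To finish, I would apply Lemma~2.1 to the continuous self-map $f|_Y\colon Y\to Y$, checking its two hypotheses. First, $CR(f|_Y)=Y$: this is the standard identity $CR(f|_{CR(f)})=CR(f)$, which holds because every point occurring on a $\delta$-cycle of $f$ also lies on a $\delta$-cycle all of whose points occur on the original one, hence lies in $CR(f)=Y$, so the cycles witnessing chain recurrence of points of $Y$ may be taken inside $Y$. Second, $f|_Y$ has the limit shadowing property; this is the substantive point, discussed below. Granting it, Lemma~2.1 gives that $f|_Y$ has the shadowing property; density of $M(f|_Y)$ in $Y$ then follows (either as part of Lemma~2.1, or directly: repeating a $\delta$-cycle based at $x\in Y$ and $\epsilon$-shadowing it by a point of $Y$ produces an orbit returning to a neighbourhood of $x$ with bounded gaps, so its $\omega$-limit set contains a minimal set meeting that neighbourhood). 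Since $M(f)\subseteq\Omega(f)=Y$ and minimality of a point is determined by its orbit closure alone, $M(f|_Y)=M(f)$, whence $\overline{M(f)}=Y=\Omega(f)$, which together with the first step completes the proof.

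The hard part will be showing that $f|_{\Omega(f)}$ inherits the limit shadowing property from $f$. Given a limit pseudo orbit $(z_i)_{i\ge0}$ contained in $Y$, limit shadowing of $f$ supplies a limit shadowing point $y\in X$, and one must replace it by one in $Y$. The obstruction is that limit shadowing is asymptotic only: $d(z_i,f^i(y))\to0$ yields $\omega(y)\subseteq Y$, but not that $y$ is asymptotic to an orbit inside $Y$. The route I would try is to use $z_i\in CR(f)$ to splice short $\delta$-cycles with $\delta\to0$ into $(z_i)$ at a sparse increasing sequence of times, producing a new limit pseudo orbit whose tail still tracks $(z_i)$ but which is forced to recur; taking an accumulation point $w$ of the forward orbit of its limit shadowing point $y'$ gives $w\in\omega(y')\subseteq Y$, and it remains to verify that $w$ asymptotically shadows $(z_i)$. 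Promoting this from convergence of windows of $(z_i)$ along a subsequence to genuine limit shadowing by $w$ is the crux. An alternative is to prove shadowing of $f|_Y$ directly by contradiction, gluing finite $\delta$-pseudo orbits of $f$ that are not $\epsilon$-shadowed into a single limit pseudo orbit; but then the decomposition of $Y$ into chain components removes the connecting chains, so either way something beyond the chain-transitive argument is needed.
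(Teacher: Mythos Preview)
Your identification of the ``hard part'' is exactly right, and unfortunately it is genuinely hard: whether $f|_{\Omega(f)}$ inherits the limit shadowing property from $f$ is precisely what the author flags as open in Remark~1.2. Neither of your two proposed routes closes this gap. The splicing idea produces a point $w\in\omega(y')\subseteq Y$ whose orbit tracks $(z_i)$ along a subsequence of time windows, but promoting this to full asymptotic tracking is, as you say, the crux, and there is no evident mechanism for it. The contradiction route founders for the reason you name: the connecting chains used in the proof of Lemma~2.1 cannot be guaranteed to stay in $Y$ unless one already knows the shadowing points can be taken in $Y$.

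The paper bypasses this entirely. It never applies Lemma~2.1 to the restricted map $f|_Y$; it applies Lemma~2.1 to $f$ itself, first with $S=\overline{M(f)}$ and later with $S=\omega(y)$. The hypothesis ``$f$ has the limit shadowing property around $S$'' is then automatic, since shadowing points are allowed anywhere in $X$. This yields only that $f$ has shadowing \emph{around} $\overline{M(f)}$; a separate Lemma~2.2 upgrades this to shadowing of $f|_{\overline{M(f)}}$. The trick there is essentially the one you sketched for density of minimal points, but used earlier and with shadowing points in $X$: extend a pseudo orbit in $\overline{M(f)}$ to a periodic one via a returning chain inside $\overline{M(f)}$, $\epsilon$-shadow it by some $z\in X$, then pass to a minimal point of $f^m$ in $\overline{O_{f^m}(z)}$, which lies in $M(f)$ and still $\epsilon$-shadows. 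The same minimal-point trick, applied with $S=\omega(y)$ after your concatenated-cycles argument, gives $CR(f)\subseteq\overline{M(f)}$ directly. So the missing idea is to work \emph{around} $S$ rather than restricted to $S$, and to use minimality (not mere non-wandering) to pull shadowing points back into the set.
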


This theorem enables us to apply the developed theory of the shadowing property to continuous maps enjoying the limit shadowing property. As an example, we obtain the following corollary (cf. \cite{Ka, LO}).

\begin{cor}
Let $f\colon X\to X$ be a continuous map with the limit shadowing property. Then, for every $x\in\Omega(f)$ and every $\epsilon>0$, there exists $y\in\Omega(f)$ with $d(x,y)<\epsilon$ such that $y$ is a periodic point for $f$, or $f|_{\overline{O_f(y)}}\colon\overline{O_f(y)}\to\overline{O_f(y)}$ is topologically conjugate to an odometer.
\end{cor}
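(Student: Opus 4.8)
The plan is to combine Theorem 1.1 with the structure theory of maps that have the shadowing property on the whole of their chain recurrent set. By Theorem 1.1, $g:=f|_{\Omega(f)}\colon\Omega(f)\to\Omega(f)$ has the shadowing property and $\Omega(f)=CR(f)=\overline{M(f)}$. Since $M(g)=M(f)$ and $M(g)\subset CR(g)$ with $CR(g)$ closed, we get $\Omega(f)=\overline{M(g)}\subset CR(g)\subset\Omega(f)$, hence $CR(g)=\Omega(f)$; moreover $\overline{O_f(y)}=\overline{O_g(y)}$ and $f|_{\overline{O_f(y)}}=g|_{\overline{O_g(y)}}$ for every $y\in\Omega(f)$. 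It therefore suffices to prove: if $g\colon Y\to Y$ is a continuous map with the shadowing property and $CR(g)=Y$, then $\{\,y\in Y:\ y\text{ is periodic for }g,\text{ or }g|_{\overline{O_g(y)}}\text{ is topologically conjugate to an odometer}\,\}$ is dense in $Y$.

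For the reduced statement I would use two standard facts: (a) a chain mixing map with the shadowing property is topologically mixing, hence enjoys the specification property and has dense periodic points; and (b) an infinite minimal equicontinuous system that possesses a regularly recurrent point is topologically conjugate to an odometer. Fix $x\in Y=CR(g)$ and $\epsilon>0$, and consider the lengths of the $\delta$-cycles of $g$ through $x$; for each $\delta>0$ these form a sub-semigroup $\Gamma(x,\delta)\subset\mathbb{N}$ which decreases as $\delta\downarrow0$. Picking $\delta_1>\delta_2>\cdots\to0$, one arranges (replacing $x$ by a nearby point if necessary) that the greatest common divisors $m_n$ of $\Gamma(x,\delta_n)$ satisfy $m_n\mid m_{n+1}$ and that the chosen $\delta_n$-cycle through $x$ is subordinate to the one at stage $n-1$. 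If the $m_n$ stabilise, a chain mixing argument at the corresponding level, together with (a) and the shadowing property of $g$, yields a periodic point within $\epsilon$ of $x$. If $m_n\to\infty$, repeat the stage-$n$ cycle to obtain a periodic $\delta_n$-pseudo orbit through $x$, let $y_n$ shadow it with error $\epsilon_n$, and choose $\epsilon_n$ small relative to the moduli of continuity of the iterates $g,g^2,\dots$ involved at stage $n$ and to all earlier stages, so that $(y_n)$ is Cauchy. Its limit $y$ is within $\epsilon$ of $x$, and $d(g^{j m_n}(y),x)\le\epsilon_n$ for all $j\ge0$; hence $\overline{O_g(y)}$ is a minimal set that factors onto the odometer $\varprojlim\,\mathbb{Z}/m_n\mathbb{Z}$, and — since the displayed estimate forces the fibre of this factor map over the base point to be $\{x\}$ — the factor map is a conjugacy. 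Thus $g|_{\overline{O_g(y)}}$ is an odometer, as desired.

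The step I expect to be the main obstacle is the control required in the second case (and, relatedly, making the first case rigorous). The natural temptation is to restrict $g$ to the chain component of $x$ and work there, but chain components of a shadowing map on its chain recurrent set need not be clopen — for instance the identity map on $\{0\}\cup\{1/n:n\ge1\}$ has the shadowing property, indeed the limit shadowing property, yet the chain component $\{0\}$ is not open — so the shadowing property is not inherited by such restrictions, and the point $y$ must be constructed by hand. The delicate part is to choose the nested cycles and the errors $\epsilon_n$ carefully enough that the limiting orbit closure is a genuine minimal \emph{equicontinuous} system, rather than a proper minimal almost one-to-one extension of $\varprojlim\,\mathbb{Z}/m_n\mathbb{Z}$ (the typical behaviour of orbit closures of regularly recurrent points). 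I would organise this bookkeeping following the treatment of shadowing systems on their chain recurrent sets in \cite{Ka, LO}.
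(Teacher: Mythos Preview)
Your reduction via Theorem 1.1 to the statement ``if $g\colon Y\to Y$ has the shadowing property and $CR(g)=Y$, then periodic points and odometer orbit closures are dense in $Y$'' is exactly what the paper does; the paper offers no further argument for the corollary and simply refers to \cite{Ka, LO} for that structural fact about shadowing maps.

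Your attempted sketch of that structural fact, however, breaks precisely at the point you yourself flag. The inference ``the fibre of this factor map over the base point is $\{x\}$, hence the factor map is a conjugacy'' is false: a minimal almost one-to-one extension of an odometer can have a residual set of singleton fibres and still fail to be conjugate to the odometer --- any non-periodic Toeplitz subshift is such an example. Underlying this is a more basic problem with the estimate $d(g^{jm_n}(y),x)\le\epsilon_n$ for \emph{all} $j\ge0$: you obtain it for $y_n$ from the shadowing, but passing to the limit $y=\lim_k y_k$ requires control of $g^{jm_n}$ for arbitrarily large $j$, which cannot come from the moduli of continuity of finitely many iterates. So as written the construction yields only a regularly recurrent point $y$ whose orbit closure \emph{factors onto} the odometer $\varprojlim\mathbb{Z}/m_n\mathbb{Z}$, not one whose orbit closure \emph{is} that odometer.

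The proofs in \cite{Ka, LO} avoid this trap by a different mechanism: rather than building a single limit point and analysing a factor map, one arranges (using shadowing together with the chain-component structure) a nested sequence of cyclic clopen decompositions whose pieces have diameters tending to zero, and then any point in their intersection has an orbit closure on which $g$ is equicontinuous by construction. You should follow that route; the fibre argument cannot be repaired.
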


To state another corollary of Theorem 1.1, we briefly recall the definition of the so-called {\em thick shadowing property}. Following the notation in \cite{O2}, we define two families of subsets in $\mathbb{N}_0=\mathbb{N}\cup\{0\}$ by
\begin{eqnarray*}
&&\mathcal{D}=\{A\subset\mathbb{N}_0\colon\lim_{n\to\infty}\,\frac{1}{n}\,|A\cap\{0,1,\dots,n-1\}|=1\},\\
&&\mathcal{F}_t=\{B\subset\mathbb{N}_0\colon\forall n\in\mathbb{N}_0\;\exists j\in\mathbb{N}_0\;\;{\rm s.t.}\;\;\{j,j+1,\dots,j+n\}\subset B\}.
\end{eqnarray*}
Each member of $\mathcal{F}_t$ is called a {\em thick set}. Note that $\mathcal{D}\subset\mathcal{F}_t$. Then, given a continuous map $f\colon X\to X$, a sequence $(x_i)_{i\ge0}$ of points in $X$ is said to be an {\em ergodic $\delta$-pseudo orbit} of $f$ if
\begin{equation*}
\{i\ge0\colon d(f(x_i),x_{i+1})\le\delta\}\in\mathcal{D},
\end{equation*}
and $(x_i)_{i\ge0}$ is said to be {\em $\epsilon$-shadowed on a thick set} by $x\in X$ if
\begin{equation*}
\{i\ge0\colon d(x_i,f^i(x))\le\epsilon\}\in\mathcal{F}_t.
\end{equation*}
We say that a continuous map $f\colon X\to X$ has the {\em thick shadowing property} if for any $\epsilon>0$, there exists $\delta>0$ such that every ergodic $\delta$-pseudo orbit of $f$ is   $\epsilon$-shadowed on a thick set by some point of $X$.

The thick shadowing property was introduced in \cite{DH} as a shadowing property defined by restricting the time at which pseudo orbits have small errors and the time at which true orbits closely shadow them to proper subclasses of the index set (e.g. ergodic shadowing property, $\underline{d}$ or $\overline{d}$-shadowing property). In \cite{BMR}, the thick shadowing property and several similar properties were proved to be equivalent to the shadowing property under the assumption of chain transitivity. The study was extended in \cite{O2}, and a characterization of the thick shadowing property was given. According to \cite[Theorem 4.5]{O2}, a continuous map $f\colon X\to X$ has the thick shadowing property iff $CR(f)=\Omega(f)=\overline{R(f)}$ and $f|_{\Omega(f)}$ has the shadowing property, where $R(f)$ denotes the set of recurrent points for $f$, i.e., $R(f)=\{x\in X\colon x\in\omega(x)\}$. Note that we have $M(f)\subset R(f)$. Thus, by Theorem 1.1, we obtain the following corollary.

\begin{cor}
For any continuous map $f\colon X\to X$, if $f$ has the limit shadowing property, then $f$ has the thick shadowing property.
\end{cor}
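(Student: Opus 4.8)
The plan is to deduce this as a formal consequence of Theorem 1.1 together with the characterization of the thick shadowing property recorded above, namely \cite[Theorem 4.5]{O2}: a continuous map $f\colon X\to X$ has the thick shadowing property if and only if $CR(f)=\Omega(f)=\overline{R(f)}$ and $f|_{\Omega(f)}$ has the shadowing property. So it suffices to verify these three conditions under the hypothesis that $f$ has the limit shadowing property.

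First I would invoke Theorem 1.1, which immediately yields $CR(f)=\Omega(f)=\overline{M(f)}$ together with the shadowing property of $f|_{\Omega(f)}$. Hence the only remaining point is the equality $\overline{R(f)}=\Omega(f)$. For this I would use the elementary inclusions $M(f)\subset R(f)\subset\Omega(f)$: the first holds because every point of a minimal set lies in its own $\omega$-limit set, and the second because a recurrent point is non-wandering. Since $\Omega(f)$ is closed, passing to closures gives $\overline{M(f)}\subset\overline{R(f)}\subset\Omega(f)$, and combining this with the identity $\Omega(f)=\overline{M(f)}$ from Theorem 1.1 forces $\overline{R(f)}=\Omega(f)$.

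With all three conditions of \cite[Theorem 4.5]{O2} verified, that theorem applies and produces the thick shadowing property for $f$, completing the argument. There is essentially no analytic obstacle here; the work has been done in Theorem 1.1, and the corollary reduces to bookkeeping among the recurrence-type sets. The only step meriting a moment's care is the standard topological fact that $R(f)\subset\Omega(f)$ and that $\Omega(f)$ is closed, which is precisely what lets us replace the "minimal" hull by the "recurrent" hull demanded by the cited characterization.
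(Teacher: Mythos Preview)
Your proposal is correct and matches the paper's own argument essentially verbatim: the paper derives the corollary directly from Theorem~1.1 together with \cite[Theorem~4.5]{O2}, using the inclusion $M(f)\subset R(f)$ (noted just before the statement) to upgrade $\overline{M(f)}=\Omega(f)$ to $\overline{R(f)}=\Omega(f)$. There is nothing to add.
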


\begin{rem}
\normalfont
The converse of Corollary 1.2 does not hold in general. In fact, as a corollary of \cite[Theorem 4.5]{O2}, if a continuous map $f$ has the shadowing property, then $f$ has the thick shadowing property. On the other hand, in \cite{GOP}, an example of continuous map with the shadowing property but without the limit shadowing property is given, so such $f$ shows that the thick shadowing property does not necessarily imply the limit shadowing property.
\end{rem}

The next result of this paper concerns the notion of equicontinuity. A map $f\colon X\to X$ is said to be {\em equicontinuous} if for any $\epsilon>0$, there is $\delta>0$ such that $d(x,y)\le\delta$ implies \[\sup_{n\ge0}d(f^n(x),f^n(y))\le\epsilon\] for all $x,y\in X$. It is known that if an equicontinuous map $f$ is surjective, then $f$ is a homeomorphism, and $f^{-1}$ is also equicontinuous (cf. \cite{AG, Ma}). When $f\colon X\to X$ is an equicontinuous map (or an equicontinuous homeomorphism), there is a metric $D$ equivalent to $d$ on $X$ such that $D(f(x),f(y))\le D(x,y)$ (furthermore, $f$ is an isometry with respect to $D$) for all $x,y\in X$. In fact, $D\colon X\times X\to[0,\infty)$ defined by \[D(x,y)=\sup_{n\ge0}d(f^n(x),f^n(y))\] (or $\sup_{n\in\mathbb{Z}}d(f^n(x),f^n(y)))$ is such a metric. Every equicontinuous homeomorphism $f\colon X\to X$ is known to satisfy $X=M(f)$ (see \cite{Ma}). Under the assumption of equicontinuity, the shadowing property is closely tied to the notion of {\em chain continuity} introduced in \cite{A}. Given a continuous map $f\colon X\to X$, a point $x\in X$ is said to be a {\em chain continuity point} for $f$ if for any $\epsilon>0$, there is $\delta>0$ such that every $\delta$-pseudo orbit $(x_i)_{i\ge0}$ of $f$ with $x_0=x$ is $\epsilon$-shadowed by $x$ itself. An equicontinuous map $f\colon X\to X$ satisfies the shadowing property iff every $x\in X$ is a chain continuity point for $f$ (see \cite{A}).

As an application of Theorem 1.1, we prove the equivalence of the two shadowing properties for equicontinuous maps (including equicontinuous homeomorphisms).

\begin{thm}
Let $f\colon X\to X$ be an equicontinuous map. Then, the following three properties are equivalent
\begin{itemize}
\item[(1)] $f$ has the limit shadowing property,
\item[(2)] $f$ has the shadowing property,
\item[(3)] ${\rm dim}\,\Omega(f)=0$, or equivalently, $\Omega(f)$ is totally disconnected.
\end{itemize} 
\end{thm}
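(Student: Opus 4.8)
The plan is to prove the cycle of implications $(2)\Rightarrow(1)\Rightarrow(3)\Rightarrow(2)$, using the equivalent metric $D$ with $D(f(x),f(y))\le D(x,y)$ supplied in the preamble, and the chain-continuity characterization of shadowing for equicontinuous maps. Without loss of generality I would replace $d$ by $D$ throughout, so that $f$ is $D$-nonexpanding (indeed, if $f$ is a homeomorphism, an isometry). Note that all three conditions in the statement, as well as the limit/shadowing properties, are invariant under passing to an equivalent metric, and that $\Omega(f)$ as a set is a topological invariant.

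For $(2)\Rightarrow(1)$: assume $f$ has the shadowing property. By Theorem 1.1's hypothesis being satisfied trivially in the other direction we instead argue directly. Let $(x_i)_{i\ge0}$ be a limit pseudo orbit. For each $k$, pick $N_k$ so that $D(f(x_i),x_{i+1})\le 2^{-k}\delta_k$ for $i\ge N_k$ (where $\delta_k$ is the shadowing constant for $\epsilon=2^{-k}$), and obtain a point $y_k$ that $2^{-k}$-shadows the tail $(x_i)_{i\ge N_k}$. The key point is that because $f$ is $D$-nonexpanding, once a tail is well shadowed it stays well shadowed: concatenating the finite initial segment of the genuine orbit of $y_k$ with the shadowing of the tail, and using nonexpansiveness to control the propagation, one extracts (by a diagonal/compactness argument on the $y_k$) a single point $y$ with $D(x_i,f^i(y))\to 0$. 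I expect this step to require some care in bookkeeping but no real obstruction, since nonexpansiveness prevents the usual blow-up.

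For $(3)\Rightarrow(2)$: assume $\Omega(f)$ is totally disconnected, equivalently zero-dimensional (it is compact metric). Since $f$ is equicontinuous, by the characterization via chain continuity it suffices to show every $x\in X$ is a chain continuity point. First observe $X=\overline{M(f)}$: indeed for an equicontinuous homeomorphism $X=M(f)$, and in the non-invertible case one reduces to the surjective core; in any event $CR(f)=\Omega(f)=X$ fails in general, so the real content is at points of $\Omega(f)$, and Theorem 1.1 is not directly available because we have not assumed limit shadowing — hence I would instead argue: a $\delta$-pseudo orbit $(x_i)$ starting at $x$ lies, by nonexpansiveness, within a controlled $D$-neighborhood that, as $\delta\to0$, collapses onto $\overline{O_f(x)}\subset\Omega(f)$; zero-dimensionality of $\Omega(f)$ lets us find arbitrarily fine clopen partitions on which $f|_{\Omega(f)}$ is ``nearly'' a permutation, giving the shadowing of $(x_i)$ by $x$ itself. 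The main obstacle is precisely this step: translating total disconnectedness of $\Omega(f)$ into chain continuity at every point of $X$. The clean way is: on the compact zero-dimensional space $\Omega(f)$, equicontinuity forces $f|_{\Omega(f)}$ to be an equicontinuous homeomorphism of a Cantor-type space, hence an inverse limit of finite permutations (an odometer-like structure up to decomposing into minimal pieces), which has shadowing; then extend from $\Omega(f)$ to $X$ using that every $\delta$-pseudo orbit of an equicontinuous map is uniformly close to $\Omega(f)$ for small $\delta$ and that nonexpansiveness propagates the $\epsilon$-closeness. Conversely, for $(1)\Rightarrow(3)$ we simply invoke Theorem 1.1: limit shadowing gives that $f|_{\Omega(f)}$ has the shadowing property, and an equicontinuous (hence nonexpanding) map with the shadowing property on a compact space must have totally disconnected domain — for if $\Omega(f)$ contained a nondegenerate continuum $C$, nonexpansiveness would make every nearby pseudo orbit shadowable only by forcing $C$ to be shadowed internally, and a standard argument (an equicontinuous homeomorphism of a connected compact space cannot have shadowing unless it is trivial) yields a contradiction. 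This last implication is short given Theorem 1.1; the bulk of the work, and the step I expect to be delicate, is $(3)\Rightarrow(2)$.
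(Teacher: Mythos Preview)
Your cycle $(2)\Rightarrow(1)\Rightarrow(3)\Rightarrow(2)$ matches the paper's, and your $(1)\Rightarrow(3)$ is essentially the paper's argument (Theorem~1.1 plus the fact that an equicontinuous homeomorphism with shadowing lives on a zero-dimensional space). The other two implications, however, have real problems.

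\textbf{The gap in $(2)\Rightarrow(1)$.} Your diagonal argument on the points $y_k$ does not go through when $f$ is not invertible. Each $y_k$ $2^{-k}$-shadows the \emph{tail} $(x_i)_{i\ge N_k}$, so $y_k$ is close to $x_{N_k}$; as $k\to\infty$ these points wander, and any subsequential limit lies in $\omega((x_i))$ rather than giving a point whose forward orbit traces $(x_i)_{i\ge 0}$. To manufacture a limit-shadowing point you would need to pull $y_k$ back through $f^{-N_k}$ and show the resulting sequence is Cauchy; this is exactly what the paper does in the proof of Lemma~3.1, but only after restricting to $\Omega(f)$, where $f$ is an equicontinuous \emph{homeomorphism} (Lemma~3.5(2)) and $f^{-1}$ is available and is an isometry. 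An equicontinuous map on $X$ need not be surjective, so your ``nonexpansiveness prevents blow-up'' intuition does not supply the missing preimages. The paper's route is: shadowing on $X$ $\Rightarrow$ shadowing on $\Omega(f)$ $\Rightarrow$ limit shadowing on $\Omega(f)$ (Lemma~3.1) $\Rightarrow$ limit shadowing on $X$ via Lemma~3.4, using $CR(f)=\Omega(f)$.

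\textbf{The detour in $(3)\Rightarrow(2)$.} You flag this as the delicate step, but in fact it is a two-line application of results you already cite. By Lemma~3.5(1), $\bigcap_{n\ge 1}f^n(X)=\Omega(f)$, and for any $x$ the chain prolongation set $C(x)$ from \cite{A} satisfies $C(x)\subset\bigcap_{n\ge 1}f^n(X)$. Hence $\dim\Omega(f)=0$ forces $\dim C(x)=0$, and Akin's criterion gives chain continuity at $x$ directly. Your sketch via clopen partitions, odometer structure, and ``extending shadowing from $\Omega(f)$ to $X$'' is not wrong in spirit, but it re-proves from scratch what the chain-continuity machinery already packages; and the intermediate claim ``$X=\overline{M(f)}$'' is false for non-surjective equicontinuous maps, as you yourself note.
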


This theorem generalizes a result of \cite{BGO} proving that every equicontinuous homeomorphism of a totally disconnected space (e.g. odometer) satisfies the shadowing property and the limit shadowing property.

We proceed to present the next corollary. For a continuous surjection $f\colon X\to X$, let $X_f$ denote the set of bi-infinite sequences of points $(x_i)_{i\in\mathbb{Z}}\in X^\mathbb{Z}$ such that $f(x_i)=x_{i+1}$ for every $i\in\mathbb{Z}$. Then, $f$ is said to be {\em c-expansive} when there exists $e>0$ such that for any $x=(x_i)_{i\in\mathbb{Z}}, y=(y_i)_{i\in\mathbb{Z}}\in X_f$, if $d(x_i,y_i)\le e$ for all $i\in\mathbb{Z}$, then $x=y$ (see \cite{AH} for details). By \cite[Theorem 3.4]{BGOR} together with \cite[Theorem 3.7]{BGO} and by Theorem 1.2, we know that for c-expansive or equicontinuous maps, the shadowing property implies the limit shadowing property. For convenience, we state this fact as the next proposition. 

\begin{prop}
Let $f\colon X\to X$ be a c-expansive or equicontinuous map. If $f$ has the shadowing property, then $f$ has the limit shadowing property.
\end{prop}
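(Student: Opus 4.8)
The plan is to split along the two hypotheses; in both cases the implication is essentially already available. \emph{If $f$ is equicontinuous}, there is nothing further to prove: Theorem 1.2 asserts that for equicontinuous maps the shadowing property and the limit shadowing property are equivalent (each being equivalent to ${\rm dim}\,\Omega(f)=0$), so the implication $(2)\Rightarrow(1)$ of that theorem is exactly the assertion.

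\emph{If $f$ is c-expansive} and has the shadowing property, the plan is to invoke the cited results \cite[Theorem 3.7]{BGO} and \cite[Theorem 3.4]{BGOR}, which together say that a c-expansive map with the shadowing property has the limit shadowing property. Unpacking this, the underlying argument runs as follows. Given a limit pseudo orbit $(x_i)_{i\ge0}$, one fixes $\epsilon_k\downarrow0$ with $\epsilon_1$ below the expansivity constant $e$, takes a shadowing modulus $\delta_k>0$ for each $\epsilon_k$, and picks $N_1<N_2<\cdots$ with $d(f(x_i),x_{i+1})\le\delta_k$ for all $i\ge N_k$. Each tail $(x_i)_{i\ge N_k}$ is then a $\delta_k$-pseudo orbit, hence is $\epsilon_k$-shadowed by some point $y_k$. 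Comparing the shadowings obtained at scales $k$ and $k+1$ along the common tail starting at $N_{k+1}$, the corresponding orbit segments stay within $2\epsilon_k$ of one another; one then uses c-expansivity, in its uniform form, to upgrade this long-range proximity into convergence of suitably re-based points $y_k$ to a single $y\in X$ with $d(x_i,f^i(y))\to0$, i.e.\ to a limit shadowing point of $(x_i)_{i\ge0}$.

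The delicate point, and the reason for appealing to the literature rather than reproducing the argument, is precisely this gluing step. The c-expansivity hypothesis controls only those bi-infinite orbits in $X_f$ that remain $e$-close at \emph{all} times, whereas the shadowing estimates deliver closeness only along forward tails, and --- when $f$ is not invertible --- one must work in the inverse limit $X_f$ even to speak of the orbits being compared. Bridging this gap requires combining the uniform form of (c-)expansivity with a diagonal/compactness argument, which is exactly what \cite[Theorem 3.7]{BGO} and \cite[Theorem 3.4]{BGOR} provide; together with the equicontinuous case settled by Theorem 1.2, these yield the proposition.
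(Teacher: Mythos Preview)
Your proposal is correct and matches the paper's own justification: the paper simply cites \cite[Theorem 3.4]{BGOR} together with \cite[Theorem 3.7]{BGO} for the c-expansive case and Theorem 1.2 for the equicontinuous case, exactly as you do. Your additional sketch of the gluing argument for c-expansive maps goes beyond what the paper provides, but it is accurate and does not deviate from the intended route.
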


By using Proposition 1.1 with Theorem 1.1 and Lemma 3.4 in Section 3, we obtain the following corollary.

\begin{cor}
Let $f\colon X\to X$ be a c-expansive or equicontinuous map.
\begin{itemize}
\item[(1)] If $f$ has the limit shadowing property, then $f|_{\Omega(f)}$ also has the limit shadowing property.
\item[(2)] $f$ has the limit shadowing property if and only if $f$ has the thick shadowing property.
\end{itemize}
\end{cor}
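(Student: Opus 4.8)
The plan is to assemble the corollary from Theorem 1.1, Corollary 1.2, the characterization of the thick shadowing property in \cite[Theorem 4.5]{O2}, Proposition 1.1, and Lemma 3.4 of Section 3, which I will use in the following form: for a c-expansive or equicontinuous map $f$, if $f|_{\Omega(f)}$ has the shadowing property and $CR(f)=\Omega(f)=\overline{R(f)}$, then $f$ has the limit shadowing property.

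For part (1), assume $f$ has the limit shadowing property. By Theorem 1.1, $CR(f)=\Omega(f)=\overline{M(f)}$ and $g:=f|_{\Omega(f)}$ has the shadowing property. Both c-expansiveness and equicontinuity are inherited by the restriction of $f$ to the closed invariant set $\Omega(f)$ --- trivially for equicontinuity, and for c-expansiveness because a bi-infinite $g$-orbit in $\Omega(f)$ is also a bi-infinite $f$-orbit, so the expansivity constant of $f$ serves for $g$. Thus $g$ is c-expansive or equicontinuous and has the shadowing property, so Proposition 1.1 applied to $g$ gives that $f|_{\Omega(f)}$ has the limit shadowing property.

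For part (2), the implication ``limit shadowing $\Rightarrow$ thick shadowing'' holds for every continuous map (Corollary 1.2), so nothing further is needed there. Conversely, suppose $f$ is c-expansive or equicontinuous and has the thick shadowing property. By \cite[Theorem 4.5]{O2} we then have $CR(f)=\Omega(f)=\overline{R(f)}$ and that $f|_{\Omega(f)}$ has the shadowing property, so Lemma 3.4 yields the limit shadowing property of $f$, completing the equivalence.

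The real work, and the step I expect to be the main obstacle, is the proof of Lemma 3.4: lifting (limit) shadowing on $\Omega(f)$ to limit shadowing on all of $X$. Given a limit pseudo orbit $(x_i)_{i\ge0}$ of $f$, a short argument shows that its $\omega$-limit set is contained in $CR(f)=\Omega(f)$, so its tail eventually lies in arbitrarily small neighborhoods of $\Omega(f)$; one must then replace that tail by a genuine $f$-orbit issuing from a point of $\Omega(f)$ that asymptotically tracks it, splice on a correction for the initial wandering segment, and control all of this uniformly enough that the total error tends to $0$. In the equicontinuous case the adapted metric $D$ with $D(f(x),f(y))\le D(x,y)$ keeps the tracking error non-increasing and is the natural tool; in the c-expansive case one couples the expansivity constant with the shadowing property of $f|_{\Omega(f)}$, which by Proposition 1.1 already carries the stronger limit shadowing property there. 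The delicate point is the handoff from the wandering part onto $\Omega(f)$.
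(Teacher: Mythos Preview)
Your assembly of part (1) and the forward direction of part (2) matches the paper exactly, including the observation (which the paper leaves implicit) that c-expansiveness and equicontinuity pass to closed invariant subsets. For the converse in (2), the paper does precisely what you outline, except that it keeps Proposition~1.1 and Lemma~3.4 as two separate steps rather than bundling them: from \cite[Theorem 4.5]{O2} one gets that $f|_{CR(f)}$ has the shadowing property; Proposition~1.1 (applied to the restriction) upgrades this to the limit shadowing property of $f|_{CR(f)}$; and only then is Lemma~3.4 invoked.

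The point on which you go astray is your description of Lemma~3.4 itself. In the paper, Lemma~3.4 is the statement that for \emph{any} continuous map $f$, if $f$ has the limit shadowing property around $CR(f)$, then $f$ has the limit shadowing property --- no c-expansiveness or equicontinuity is assumed, and the proof is short and soft. Given a limit pseudo orbit $(x_i)$, Lemma~3.3 gives $d(x_i,CR(f))\to 0$; one simply replaces each $x_i$ by a nearest point $y_i\in CR(f)$, checks that $(y_i)$ is again a limit pseudo orbit, takes a limit shadowing point $y$ for $(y_i)$, and observes that the same $y$ limit-shadows $(x_i)$. There is no ``handoff from the wandering part'', no splicing, and no correction for the initial segment: since limit shadowing only requires $d(x_i,f^i(y))\to 0$, the initial wandering portion of the pseudo orbit is entirely irrelevant. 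The hypothesis of c-expansiveness or equicontinuity enters only through Proposition~1.1, which is the step converting shadowing on $CR(f)$ into limit shadowing on $CR(f)$; that proposition is taken from the literature and is not re-proved here. So the ``real work'' you anticipate is in fact trivial, and the genuinely nontrivial ingredient (Proposition~1.1) is one you are already citing.
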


\begin{rem}
\normalfont
The author does not know whether the implication as Corollary 1.3 (1) holds true for every continuous map. By Corollary 1.2, the limit shadowing property always implies the thick shadowing property, but as mentioned in Remark 1.1, the converse does not hold in general. Corollary 1.3 (2) states that the converse also holds in the class of c-expansive or equicontinuous maps.
\end{rem}

\begin{rem}
\normalfont
We know that every equicontinuous homeomorphism with the shadowing property satisfies the h-shadowing property, and this implies the s-limit shadowing property \cite[Theorems 3.7 and 6.1]{BGO}. Indeed, the s-limit shadowing property implies both of the shadowing and limit shadowing properties (see \cite[Theorem 3.7]{BGO}). Therefore, by Lemma 3.1 in Section 3, the shadowing, limit shadowing, and s-limit shadowing properties are equivalent for equicontinuous homeomorphisms. Then, it is natural to expect that the equivalence holds true for general equicontinuous maps. However, this is not the case. We show it in Section 4 by giving an example of equicontinuous map with the shadowing property but without the s-limit shadowing property. The precise definition of the s-limit shadowing property will be given at the beginning of Section 4.
\end{rem}

This paper consists of four sections. We prove Theorem 1.1 in Section 2. Theorem 1.2 and Corollary 1.3 are proved in Section 3. In Section 4, we give an example of equicontinuous map showing that the s-limit shadowing property is not necessarily equivalent to the three properties in Theorem 1.2.   

\section{Proof of Theorem 1.1}

In this section, we prove Theorem 1.1. For the purpose, we need a lemma which generalizes \cite[Theorem 7.3] {KKO}. For a continuous map $f\colon X\to X$ and $S\subset X$, we say that $f$ has the {\em shadowing property around $S$} if for any $\epsilon>0$, there exists $\delta>0$ such that every $\delta$-pseudo orbit of $f$ contained in $S$ is $\epsilon$-shadowed by some point of $X$, and we say that $f$ has the {\em limit shadowing property around $S$} if every limit pseudo orbit of $f$ contained in $S$ has a limit shadowing point in $X$.

\begin{lem}
Let $f\colon X\to X$ be a continuous map and let $S\subset X$ be a compact $f$-invariant subset such that $CR(f|_S)=S$. If $f$ has the limit shadowing property around $S$, then $f$ has the shadowing property around $S$.
\end{lem}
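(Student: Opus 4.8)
The plan is to argue by contraposition on the shadowing side: assuming $f$ does \emph{not} have the shadowing property around $S$, I will construct a limit pseudo orbit of $f$ contained in $S$ that admits no limit shadowing point, contradicting the limit shadowing property around $S$. The failure of shadowing around $S$ gives a fixed $\epsilon>0$ such that for every $n\ge 1$ there is a $2^{-n}$-pseudo orbit $P_n$ of $f$ contained in $S$ that is not $\epsilon$-shadowed by any point of $X$. The first step is a standard compactness/finiteness reduction: using uniform continuity of $f$ on $X$ (hence on $S$) together with the fact that $S$ is compact, I can truncate each $P_n$ to a \emph{finite} $2^{-n}$-pseudo orbit in $S$ that is still not $\epsilon$-shadowed — indeed, if every finite truncation were $\epsilon$-shadowed, a diagonal/compactness argument in $X$ would produce a point $\epsilon$-shadowing all of $P_n$.

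The second and central step is to splice these finite unshadowable blocks into a single infinite limit pseudo orbit. Here the hypothesis $CR(f|_S)=S$ is exactly what makes the splicing possible: given the last point of one block and the first point of the next, both lying in $S$, and given any error tolerance $\gamma>0$, chain recurrence of $f|_S$ lets me join them by a $\gamma$-chain \emph{inside $S$} (connect each to a common chain recurrent point, or use that $CR(f|_S)=S$ forces $f|_S$ to be chain transitive on each chain component and arrange the blocks accordingly — more simply, from $CR(f|_S)=S$ one gets for any $x,y\in S$ and any $\gamma$ a $\gamma$-chain in $S$ from $x$ to $y$ provided $x,y$ lie in the same chain component, and by collapsing $S$ into finitely many pieces this is a routine matter). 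I then build the infinite sequence: block $P_1$, a $2^{-1}$-connecting chain, block $P_2$, a $2^{-2}$-connecting chain, block $P_3$, and so on, with the connecting-chain tolerances and the block indices chosen so that the one-step errors along the whole concatenation tend to $0$. By construction this concatenation is a limit pseudo orbit of $f$ entirely contained in $S$.

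The final step derives the contradiction. Suppose $y\in X$ limit-shadows this concatenation, so $d(x_i,f^i(y))\to 0$; then for all $i$ beyond some index $N$ we have $d(x_i,f^i(y))\le\epsilon$. But the block $P_n$ for $n$ large enough starts after index $N$, and $f^{j}(y)$ for the appropriate shift $j$ would then $\epsilon$-shadow all of $P_n$ — contradicting the choice of $P_n$ as unshadowable. Hence no limit shadowing point exists, contradicting the limit shadowing property around $S$, and the lemma follows.

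The main obstacle I anticipate is the splicing step: making precise that $CR(f|_S)=S$ yields connecting chains \emph{within $S$} between the tail of one unshadowable block and the head of the next, with controllable error. The subtlety is that chain recurrence of $f|_S$ at every point does not a priori give chain transitivity of $f|_S$ (there may be several chain components), so one must either pre-sort the blocks so that consecutive ones live in the same chain component and loop within it, or observe that $X$ — and hence the relevant behavior — can be handled one chain component at a time, since a shadowing failure localized in $S$ must already be a failure localized in some single chain component of $f|_S$. Getting this bookkeeping clean, while keeping all one-step errors summable-to-zero, is the technical heart of the argument; the rest is routine compactness.
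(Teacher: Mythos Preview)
Your overall strategy matches the paper's proof: assume shadowing around $S$ fails, extract finite unshadowable $\delta$-chains in $S$ with $\delta\to 0$, splice them via connecting chains inside $S$ into a single limit pseudo orbit, and contradict limit shadowing. Your truncation step and the final contradiction are exactly as in the paper.

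On the obstacle you flag, the paper resolves the splicing problem by your \emph{first} alternative, made precise through a diagonal selection over chain components at multiple scales. For each $k\ge 1$, the hypothesis $CR(f|_S)=S$ gives a decomposition of $S$ into finitely many clopen $k^{-1}$-chain components, and any $n^{-1}$-chain with $n^{-1}<k^{-1}$ lies entirely in one of them. By successive pigeonhole one selects a nested sequence $C_1\supset C_2\supset\cdots$ with $C_k$ a $k^{-1}$-chain component and a subsequence $\gamma_{n_k}$ of the unshadowable chains with $\gamma_{n_k}\subset C_k$; since $\gamma_{n_k}\cup\gamma_{n_{k+1}}\subset C_k$, a $k^{-1}$-chain in $S$ connects the end of $\gamma_{n_k}$ to the start of $\gamma_{n_{k+1}}$, and concatenation yields the desired limit pseudo orbit. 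Your \emph{second} alternative --- localizing the shadowing failure to a single chain component of $f|_S$ --- should be discarded: a $\delta$-pseudo orbit in $S$ lies in a single $\delta$-chain component, not in a single (limit) chain component, and there is no reason the unshadowable chains $\gamma_n$ must accumulate in one fixed chain class as $n\to\infty$, so the failure does not a priori localize.
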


\begin{proof}
For $\delta>0$, we decompose $S$ into a disjoint union of $\delta$-chain components. Precisely, for any $\delta>0$, we define a relation $\sim_\delta$ on $S$ as follows. For $x,y\in S$, $x\sim_\delta y$ iff there are two $\delta$-chains  $(x_i)_{i=0}^k\subset S$ and $(y_i)_{i=0}^l\subset S$ of $f$ such that $x_0=y_l=x$ and $x_k=y_0=y$. It is clear from the definition that $\sim_\delta$ is symmetric and transitive. Then, the assumption $CR(f|_S)=S$ guarantees the following properties
\begin{itemize}
\item[(1)] $x\sim_\delta x$ for every $x\in S$,
\item[(2)] $x\sim_\delta y$ for all $x,y\in S$ with $d(x,y)<\delta$,
\item[(3)] $x\sim_\delta f(x)$ for every $x\in S$.
\end{itemize} 
By (1),  $\sim_\delta$ is an equivalence relation on $S$. Moreover, by (2) and (3), every equivalence class $C$ with respect to $\sim_\delta$ is clopen in $S$ and $f$-invariant, i.e., $f(C)\subset C$. Each equivalence class is called a $\delta$-chain component, and therefore $S$ is decomposed into finitely many $\delta$-chain components. We denote by ${\mathcal C}(\delta)$ the (finite) set of all $\delta$-chain components. Given $\delta_1<\delta_2$, note that $x\sim_{\delta_1}y$ implies $x\sim_{\delta_2}y$ for all $x,y\in S$. Hence, for every $C\in{\mathcal C}(\delta_1)$, there is $D\in{\mathcal C}(\delta_2)$ such that $C\subset D$, and so for every $D\in{\mathcal C}(\delta_2)$, putting
\begin{equation*}
\mathcal{S}=\{C\in{\mathcal C}(\delta_1)\colon D\cap C\neq\emptyset\},
\end{equation*}
we have $D=\bigsqcup_{C\in\mathcal{S}}C$. In other words, every $\delta_2$-chain component is a disjoint union of some $\delta_1$-chain components.

Now, assume that $f$ does not have the shadowing property around $S$. Then, there is $\epsilon>0$ such that for every integer $n\ge1$, we can take an $n^{-1}$-chain $\gamma_n=(x_i^{(n)})_{i=0}^{l_n}\subset S$ of $f$ which is not $\epsilon$-shadowed by any point of $X$, meaning that for every $y\in X$, there is $0\le i \le l_n$ such that $d(x_i^{(n)},f^i(y))>\epsilon$. By using these chains, we shall construct a limit pseudo orbit in $S$ which has no limit shadowing point. For the purpose, we consider the $k^{-1}$-chain decomposition ${\mathcal C}(k^{-1})$ of $S$ for every integer $k\ge1$. Here, note that if $n^{-1}<\delta$, then $\gamma_n$ must be contained in some $C\in{\mathcal C}(\delta)$. This is because if $x\in C\in{\mathcal C}(\delta)$ and $d(f(x),y)\le n^{-1}<\delta$ for some $y\in S$, then we have $x\sim_\delta f(x)$ by (3), $f(x)\sim_\delta y$ by (2), and so $x\sim_\delta y$, implying $y\in C$. From this, we can take a $1$-chain component $C_1\in{\mathcal C}(1)$ and a sequence of integers $1\le n_{1,1}<n_{1,2}<\cdots$ such that $\gamma_{n_{1,j}}\subset C_1$ for every $j\ge1$. Then, since $C_1$ is a disjoint union of some elements of ${\mathcal C}(2^{-1})$, we can choose a $2^{-1}$-chain component $C_2\in{\mathcal C}(2^{-1})$ and a subsequence $1\le n_{2,1}<n_{2,2}<\cdots$ of $(n_{1,j})_{j\ge1}$ (i.e., $n_{2,j}=n_{1,\phi(j)}$ for some increasing function $\phi\colon\mathbb{N}\to\mathbb{N}$) such that $C_2\subset C_1$ and $\gamma_{n_{2,j}}\subset C_2$ for every $j\ge1$. Proceeding inductively, for every $k\ge1$, we obtain a $k^{-1}$-chain component $C_k\in{\mathcal C}(k^{-1})$ and  an increasing sequence of integers $(n_{k,j})_{j\ge1}$ with the following properties
\begin{itemize}
\item $\gamma_{n_{k,j}}\subset C_k$ for every $j\ge1$,
\item $C_{k+1}\subset C_k$,
\item $(n_{k+1,j})_{j\ge1}$ is a subsequence of $(n_{k,j})_{j\ge1}$.
\end{itemize} 
Then, putting $n_k=n_{k,k}$, we have $\gamma_{n_k}\subset C_k$ for every $k\ge1$. For each $k\ge1$, since $\gamma_{n_k}\cup\gamma_{n_{k+1}}\subset C_k$, we can take a $k^{-1}$-chain $\beta_k=(y_i)_{i=0}^{m_k}\subset S$ of $f$ such that $y_0=x_{l_{n_k}}^{(n_k)}$ and $y_{m_k}=x_0^{(n_{k+1})}$. By concatenating obtained chains, we get the following limit pseudo orbit:
\begin{equation*}
\alpha=\gamma_{n_1}\beta_1\gamma_{n_2}\beta_2\gamma_{n_3}\beta_3\gamma_{n_4}\beta_4\cdots=(x_i)_{i\ge0}\subset S.
\end{equation*}
However, by the choice of $\gamma_n$, we have $\limsup_{i\to\infty}d(x_i,f^i(y))\ge\epsilon$ for every $y\in X$, which contradicts that
$f$ has the limit shadowing property around $S$. Thus, $f$ has the shadowing property around $S$.
\end{proof}

The next lemma is a modification of \cite[Theorem 3.4.2]{AH} and \cite[Lemma 1]{M} which state that if $f$ has the shadowing property, then so does $f|_{\Omega(f)}$. Although the proof is similar to that of \cite[Theorem 3.4.2]{AH} or \cite[Lemma 1]{M}, we shall give it for completeness. Note that we consider $\overline{M(f)}$ instead of $\Omega(f)$ and only assume the shadowing property around $\overline{M(f)}$ (not the global shadowing property).
 
\begin{lem}
If a continuous map $f\colon X\to X$ has the shadowing property around $\overline{M(f)}$, then $f|_{\overline{M(f)}}\colon\overline{M(f)}\to\overline{M(f)}$ satisfies the shadowing property.
\end{lem}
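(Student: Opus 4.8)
The plan is to follow the classical argument that shadowing passes to the non-wandering set (as in \cite[Theorem 3.4.2]{AH} or \cite[Lemma 1]{M}), but keeping careful track of the fact that we only have the shadowing property \emph{around} $\overline{M(f)}$, and that we want to shadow pseudo orbits \emph{inside} $\overline{M(f)}$ by a point of $\overline{M(f)}$ rather than just of $X$. Write $K=\overline{M(f)}$; this is a compact $f$-invariant set. Fix $\epsilon>0$. First I would use the shadowing property around $K$ to get $\delta_0>0$ such that every $\delta_0$-pseudo orbit contained in $K$ is $\epsilon/2$-shadowed by some point of $X$. The real work is to upgrade ``some point of $X$'' to ``some point of $K$''.

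The key device is a long, periodic-like extension trick. Given a $\delta$-pseudo orbit $\xi=(x_i)_{i=0}^{N}$ in $K$ (a finite chain), I want to produce an infinite $\delta_0$-pseudo orbit in $K$ that contains $\xi$ as an initial segment, returns close to $x_0$, and such that any point $\epsilon/2$-shadowing it must land near $K$ throughout. To do this, note that $x_0\in K=\overline{M(f)}$, so there is a minimal point $p$ with $d(x_0,p)$ small; since $p$ is minimal (hence recurrent), for a suitable $m$ we have $d(f^m(p),p)$ small, and the orbit segment $p,f(p),\dots,f^m(p)$ lies in $K$. Concatenating $\xi$ with a short $\delta$-jump to $p$, then many copies of this nearly-periodic block $p,\dots,f^m(p)$, I build an infinite $\delta_0$-pseudo orbit $\eta$ in $K$ (here I need to first choose $\delta\le\delta_0$ small enough, using uniform continuity of $f$ on $X$, so that all these small jumps stay $\le\delta_0$ and stay inside the open $\rho$-neighborhood intersected with $K$). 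Shadowing around $K$ gives $z\in X$ that $\epsilon/2$-shadows $\eta$. Then along the infinitely many near-copies of the block, the orbit of $z$ comes within $\epsilon/2+(\text{small})$ of points of $K$ infinitely often; since these visits occur along the whole orbit and $K$ is compact, one extracts that some point $w\in K$ whose forward orbit $\epsilon/2$-shadows $\xi$ — more precisely, I would argue that the closure in $K$ of the set of ``shadowing candidates'' is nonempty and produces the desired $w\in K$ with $d(x_i,f^i(w))\le\epsilon$ for $0\le i\le N$. Finally, an infinite pseudo orbit in $K$ is handled by applying the finite case to longer and longer initial segments and taking a limit point in $K$ of the resulting shadowing points.

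More concretely, for the limit-point argument at the end: let $(x_i)_{i\ge0}\subset K$ be a $\delta$-pseudo orbit. For each $N$ apply the finite case to get $w_N\in K$ with $d(x_i,f^i(w_N))\le\epsilon$ for $0\le i\le N$. By compactness of $K$, pass to a subsequence $w_{N_k}\to w\in K$; then for each fixed $i$, taking $k$ large enough that $N_k\ge i$ and using continuity of $f^i$, $d(x_i,f^i(w))\le\epsilon$. Hence $w\in K$ $\epsilon$-shadows the whole pseudo orbit, and $f|_K$ has the shadowing property.

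I expect the main obstacle to be the step that extracts a shadowing point \emph{in $\overline{M(f)}$} from the shadowing point $z\in X$ produced by ``shadowing around $K$''. The naive statement would want the shadowing point itself to be non-wandering, which need not hold for a single finite piece; the fix is precisely the insertion of infinitely many near-periodic minimal-orbit blocks, which forces the forward orbit of $z$ to accumulate on $K$ in a controlled way, and then to pick the actual shadowing point among these accumulations (formally, among the points $f^{j}(z)$ sitting in the shadowed copies of the block, whose limit lies in $K$ and still $\epsilon$-shadows the original finite chain, after absorbing the errors $\epsilon/2+\text{small}\le\epsilon$). Managing the two-level choice of constants ($\rho$ for the neighborhood in which minimal/recurrent points are found, then $\delta$ small relative to $\rho$ and to the modulus of uniform continuity of $f$, all $\le\delta_0$) is routine but must be laid out carefully; none of it is deep, it is bookkeeping of $\epsilon$'s and $\delta$'s of exactly the type in the cited references.
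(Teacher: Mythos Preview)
Your overall scaffold (shadow finite initial segments by points of $K=\overline{M(f)}$, then pass to a limit in $K$) is correct, and the final limit argument is fine. The gap is in the ``periodic-like extension'' step, and it is a genuine one.

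First, a concatenation problem: you pick a minimal point $p$ with $d(x_0,p)$ small and then propose to ``jump'' from the end of $\xi$ to $p$. But the end of $\xi$ is $x_N$, and there is no reason $d(f(x_N),p)$ is small; as written, $\eta$ is not a $\delta_0$-pseudo orbit at all.

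More seriously, even if you repair the concatenation (say by taking $p$ close to $f(x_N)$ instead), your $\eta$ contains $\xi$ only \emph{once}, at the very beginning, followed by infinitely many copies of the minimal block $(p,f(p),\dots,f^{m-1}(p))$. Consequently the forward orbit of the shadowing point $z$ accumulates near that minimal block, not near $\xi$. Any point $w$ you extract from accumulations of $\{f^j(z)\}$ will $\epsilon$-shadow the block $(p,\dots,f^{m-1}(p))$, but nothing forces $(w,f(w),\dots,f^N(w))$ to stay close to $(x_0,\dots,x_N)$. The clause ``whose limit lies in $K$ and still $\epsilon$-shadows the original finite chain'' is precisely the unjustified step.

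The fix used in the paper is to make the periodic pseudo orbit repeat $\alpha_n=(x_0,\dots,x_n)$ itself. For that one must close up from $x_n$ back to $x_0$ by a $\delta$-chain \emph{inside} $K$; this is possible because $CR(f|_K)=K$, so $x_0$ and $x_n$ lie in the same $\delta$-chain component of $K$. One then forms the $m_n$-periodic $\delta$-pseudo orbit $\gamma_n=\alpha_n\beta_n\alpha_n\beta_n\cdots\subset K$, shadows it by some $y_n\in X$, and observes that every $f^{m_n a}(y_n)$, hence every point of $\overline{O_{f^{m_n}}(y_n)}$, still $\epsilon$-shadows $\alpha_n$. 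By Zorn's lemma this orbit closure contains a point minimal for $f^{m_n}$, hence minimal for $f$; that point lies in $M(f)\subset K$ and $\epsilon$-shadows $\alpha_n$. The two ingredients your sketch is missing are exactly (i) using chain recurrence in $K$ (not recurrence of a single nearby minimal point) to close the loop so that $\alpha_n$ recurs in the pseudo orbit, and (ii) extracting a minimal point from the $f^{m_n}$-orbit closure of the shadowing point, rather than an arbitrary accumulation point.
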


\begin{proof}
Given any $\epsilon>0$, take $\delta>0$ so small that every $\delta$-pseudo orbit of $f$ in $\overline{M(f)}$ is $\epsilon$-shadowed by some point of $X$. Note that $CR(f|_{\overline{M(f)}})=\overline{M(f)}$ and let ${\mathcal C}(\delta)$ be the set of $\delta$-chain components of $\overline{M(f)}$ (see the proof of Lemma 2.1). Fix $0<\eta<\delta$ and let $\alpha=(x_i)_{i\ge0}\subset\overline{M(f)}$ be an $\eta$-pseudo orbit of $f$. We shall show that $\alpha$ is $\epsilon$-shadowed by some $x\in\overline{M(f)}$. As the above proof, $\alpha$ is contained in some $C\in{\mathcal C}(\delta)$. For each $n\ge1$, since $\{x_0,x_n\}\subset C$, there is a $\delta$-chain $\beta_n=(x_i^{(n)})_{i=0}^{k_n}\subset\overline{M(f)}$ of $f$ such that $x_0^{(n)}=x_n$ and $x_{k_n}^{(n)}=x_0$. Put $\alpha_n=(x_0,x_1,\dots,x_n)$, $m_n=n+k_n$, and consider the $m_n$-periodic $\delta$-pseudo orbit \[\gamma_n=\alpha_n\beta_n\alpha_n\beta_n\cdots\subset\overline{M(f)}\] of $f$, which is $\epsilon$-shadowed by some $y_n\in X$. Then, $\gamma_n$ is $\epsilon$-shadowed by $f^{m_na}(y_n)$ for every $a\ge0$, so is by any $y\in\overline{O_{f^{m_n}}(y_n)}$. Since $\overline{O_{f^{m_n}}(y_n)}$ is a compact $f^{m_n}$-invariant subset, we have $\overline{O_{f^{m_n}}(y_n)}\cap M(f^{m_n})\ne\emptyset$; Zorn's lemma implies the existence of at least one minimal point. Fix an \[x_n\in \overline{O_{f^{m_n}}(y_n)}\cap M(f^{m_n}).\] Then, $\gamma_n$ is $\epsilon$-shadowed by $x_n$, and $x_n\in M(f^{m_n})=M(f)$ (see Chapter V of \cite{BC} for a proof of this equality). Now, $\alpha_n$ is $\epsilon$-shadowed by $x_n\in M(f)$ for each $n\ge1$. Take $x\in X$ and an increasing sequence of integers $0<n_1<n_2<$ with $\lim_{j\to\infty}x_{n_j}=x$. Then, we have $x\in\overline{M(f)}$ and easily see that $\alpha$ is $\epsilon$-shadowed by $x$, completing the proof. 
\end{proof}

Now, let us prove Theorem 1.1.

\begin{proof}[Proof of Theorem 1.1]
It is obvious from the definition that $f$ has the limit shadowing property around $\overline{M(f)}$. Since $CR(f|_{\overline{M(f)}})=\overline{M(f)}$, Lemma 2.1 implies that $f$ has the shadowing property around $\overline{M(f)}$. Then, by Lemma 2.2, \[f|_{\overline{M(f)}}\colon\overline{M(f)}\to\overline{M(f)}\] satisfies the shadowing property; therefore, it only remains to prove $CR(f)=\Omega(f)=\overline{M(f)}$. For the purpose, it suffices to prove $CR(f)\subset\overline{M(f)}$, because $\overline{M(f)}\subset\Omega(f)\subset CR(f)$ holds for every continuous map $f$.

Given any $x\in CR(f)$, take an $n^{-1}$-cycle $\gamma_n=(x_i^{(n)})_{i=0}^{m_n}$ of $f$ with $x_0^{(n)}=x_{m_n}^{(n)}=x$ for each integer $n\ge1$. By concatenating them, we obtain a limit pseudo orbit $\alpha=\gamma_1\gamma_2\gamma_3\cdots$ of $f$, which has a limit shadowing point $y\in X$. Then, it is clear that $x\in\omega(y)$. Note that $f|_{\omega(y)}$ satisfies $CR(f|_{\omega(y)})=f|_{\omega(y)}$, since $f|_{\omega(y)}$ is internally chain transitive (see \cite{BGOR} for details of the notion). Because $f$ has the limit shadowing property around $\omega(y)$, by Lemma 2.1, $f$ has the shadowing property around $\omega(y)$. Given any $\epsilon>0$, take $\delta>0$ so small that every $\delta$-pseudo orbit of $f$ in $\omega(y)$ is $\epsilon$-shadowed by some point of $X$. Then, again by $CR(f|_{\omega(y)})=f|_{\omega(y)}$, there is a $\delta$-cycle $\gamma=(x_i)_{i=0}^m\subset\omega(y)$ of $f$ with $x_0=x_m=x$. The rest of the proof is identical to that of \cite[Theorem 1]{M}. Consider the $m$-periodic $\delta$-pseudo orbit $\beta=\gamma\gamma\gamma\cdots\subset\omega(y)$ of $f$, which is $\epsilon$-shadowed by some $z\in X$. Note that $\overline{O_{f^m(z)}}$ is a compact $f^m$-invariant subset of $X$ contained in $B_\epsilon(x)=\{w\in X\colon d(x,w)\le\epsilon\}$. Fix a $w\in\overline{O_{f^m(z)}}\cap M(f^m)$. Then, it follows that  $d(x,w)\le\epsilon$ and $w\in M(f)$. Since $x\in CR(f)$ and $\epsilon>0$ are arbitrary, we conclude $CR(f)\subset\overline{M(f)}$.
\end{proof}

\section{Proof of Theorem 1.2 and Corollary 1.3}

In this section, we prove Theorem 1.2 and Corollary 1.3. First, we prove the following lemma.

\begin{lem}
Let $f\colon X\to X$ be an equicontinuous homeomorphism. Then, the following three properties are equivalent
\begin{itemize}
\item[(1)] $f$ has the limit shadowing property,
\item[(2)] $f$ has the shadowing property,
\item[(3)] ${\rm dim}\,X=0$, or equivalently, $X$ is totally disconnected.
\end{itemize} 
\end{lem}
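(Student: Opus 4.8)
The plan is to establish the cyclic chain of implications $(3)\Rightarrow(1)\Rightarrow(2)\Rightarrow(3)$. Throughout I would work with the metric $D$, equivalent to $d$, for which $f$ is an isometry; such a $D$ exists because $f$ is an equicontinuous homeomorphism, and this substitution is harmless since the shadowing property, the limit shadowing property, and chain continuity are topological notions, hence independent of the choice of compatible metric. I would freely use the characterization of \cite{A} that an equicontinuous map has the shadowing property if and only if every point of $X$ is a chain continuity point, and the result of \cite{BGO} that every equicontinuous homeomorphism of a totally disconnected compact metric space has the limit shadowing property.

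The implication $(3)\Rightarrow(1)$ is exactly the cited result of \cite{BGO}. For $(1)\Rightarrow(2)$: an equicontinuous homeomorphism satisfies $X=M(f)$, and since $M(f)\subset\Omega(f)\subset X$ this forces $\Omega(f)=X$; applying Theorem 1.1 to $f$, which has the limit shadowing property by hypothesis, we conclude that $f=f|_{\Omega(f)}$ has the shadowing property.

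The core of the lemma is $(2)\Rightarrow(3)$, which I would prove contrapositively. Suppose $X$ is not totally disconnected, so some connected component $K\subset X$ contains two distinct points $x,y$; put $c=D(x,y)>0$, and assume toward a contradiction that $f$ has the shadowing property. By \cite{A}, $x$ is a chain continuity point, so there is $\delta>0$ such that every $\delta$-pseudo orbit $(z_i)_{i\ge0}$ with $z_0=x$ satisfies $D(z_i,f^i(x))\le c/2$ for all $i\ge0$. Since $K$ is a connected metric space it is $\delta$-chainable: there are $p_0,p_1,\dots,p_n\in K$ with $p_0=x$, $p_n=y$, and $D(p_j,p_{j+1})\le\delta$ for $0\le j\le n-1$ (the set of points reachable from $x$ by such a chain inside $K$ is clopen and nonempty). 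Define $(z_i)_{i\ge0}$ by $z_j=f^j(p_j)$ for $0\le j\le n$ and $z_{n+k}=f^{n+k}(y)$ for $k\ge0$. Since $f$ is a $D$-isometry, $D(f(z_j),z_{j+1})=D(f^{j+1}(p_j),f^{j+1}(p_{j+1}))=D(p_j,p_{j+1})\le\delta$ for $0\le j\le n-1$, while $D(f(z_i),z_{i+1})=0$ for $i\ge n$; thus $(z_i)_{i\ge0}$ is a $\delta$-pseudo orbit of $f$ with $z_0=x$. Applying chain continuity at $x$ at index $i=n$ gives $D(z_n,f^n(x))\le c/2$; but $z_n=f^n(y)$ and $f$ is a $D$-isometry, so $D(z_n,f^n(x))=D(y,x)=c$. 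This yields $c\le c/2$, a contradiction, so $f$ does not have the shadowing property, and $(2)\Rightarrow(3)$ follows.

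I expect the main obstacle to be the construction of the non-shadowable pseudo orbit in $(2)\Rightarrow(3)$. The obvious attempt---taking an arbitrary $\delta$-chain from $x$ to $y$ inside their common $\delta$-chain component---does not work, because one cannot control the length of such a chain, hence cannot control where the true orbit of $x$ sits at the terminal time, and the discrepancy there need not be large. The remedy is to use $\delta$-chainability \emph{inside the connected component} $K$ and to drag the resulting chain forward by powers of $f$ via the substitution $p_j\mapsto f^j(p_j)$: this arranges that the terminal displacement between the pseudo orbit and the true orbit of $x$ equals $D(x,y)$ regardless of the chain's length, while the isometry property of $D$ keeps every one-step error at most $\delta$. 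The remaining implications are bookkeeping built on Theorem 1.1 and the cited results.
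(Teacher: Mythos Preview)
Your proof is correct, but it distributes the effort differently from the paper. Both arguments handle $(1)\Rightarrow(2)$ identically, via Theorem~1.1 and the identity $\Omega(f)=X$ for equicontinuous homeomorphisms. The divergence is in which of the two remaining implications is given a self-contained argument. The paper cites \cite[Theorem~4]{M} for the equivalence $(2)\Leftrightarrow(3)$ and instead supplies a direct proof of $(3)\Rightarrow(1)$: using a clopen $\epsilon$-partition of $X$ (Lemma~3.2) to show that every $\delta$-pseudo orbit is $\epsilon$-shadowed by its initial point, and then, given a limit pseudo orbit, pulling back the points $x_{i_n}$ along the isometry $f^{-i_n}$ to form a Cauchy sequence whose limit is the desired limit shadowing point. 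You do the opposite: you cite \cite{BGO} for $(3)\Rightarrow(1)$ and instead give a self-contained proof of $(2)\Rightarrow(3)$, exploiting chain continuity from \cite{A} and the isometry to transport a $\delta$-chain inside a nontrivial connected component forward by powers of $f$, so that the terminal discrepancy equals $D(x,y)$ exactly. Your construction is elegant and arguably more transparent than tracing through \cite{M}; on the other hand, the paper's route makes Lemma~3.2 available as a standalone tool, which is reused later (it applies to equicontinuous maps, not just homeomorphisms, and feeds into the proof of Theorem~1.2). Either allocation is perfectly acceptable.
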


A remark is needed before the proof of Lemma 3.1. The implication $(1)\Rightarrow(2)$ in Lemma 3.1 is an immediate corollary of Theorem 1.1. We need only note that $X=M(f)=\Omega(f)$ holds for every equicontinuous homeomorphism $f\colon X\to X$, as mentioned in Section 1. The equivalence $(2)\Leftrightarrow (3)$ is an already known fact on the shadowing property (see \cite[Theorem 4]{M}). The implication $(3)\Rightarrow(1)$ can be verified by a simple combination of the results given in \cite{BGO}, which are (A) Every equicontinuous homeomorphism $f\colon X\to X$ with ${\rm dim}\,X=0$ satisfies the h-shadowing property, and (B) h-shadowing property implies the s-limit shadowing property, and then the limit shadowing property. However, we provide a self-contained proof of $(3)\Rightarrow(1)$ below. For the purpose, we need the next lemma.

\begin{lem}
Let $f\colon X\to X$ be an equicontinuous map. If ${\rm dim}\,X=0$, then for every $\epsilon>0$, there exists $\delta=\delta(\epsilon)>0$ such that any $\delta$-pseudo orbit $(x_i)_{i\ge0}$ of $f$ is $\epsilon$-shadowed by $x_0$, that is, $d(x_i,f^i(x_0))\le\epsilon$ for every $i\ge0$. 
\end{lem}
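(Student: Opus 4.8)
The plan is to exploit the existence of the equivalent metric $D$ under which $f$ is distance-nonincreasing, together with the zero-dimensionality of $X$, which guarantees a basis of clopen sets. First I would fix $\epsilon>0$. Using the equivalence of $d$ and $D$, choose $\epsilon'>0$ so that $D(x,y)\le\epsilon'$ implies $d(x,y)\le\epsilon$. Since $\operatorname{dim}X=0$, the space $X$ admits a finite cover by pairwise disjoint clopen sets $U_1,\dots,U_r$ each of $D$-diameter $\le\epsilon'$; let $\delta_0>0$ be a Lebesgue-type number for this cover with respect to $D$, i.e.\ any two points at $D$-distance $<\delta_0$ lie in a common $U_j$. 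Then translate $\delta_0$ back through the equivalence of the metrics to obtain $\delta=\delta(\epsilon)>0$ with the property that $d(a,b)\le\delta$ implies $D(a,b)<\delta_0$, hence $a,b$ lie in the same clopen piece.

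The key step is an induction on $i$ showing that $f^i(x_0)$ and $x_i$ lie in the same clopen piece $U_{j(i)}$ of the cover, from which $D(f^i(x_0),x_i)\le\epsilon'$ and therefore $d(f^i(x_0),x_i)\le\epsilon$ follows immediately. For $i=0$ this is trivial. Suppose $f^i(x_0)$ and $x_i$ lie in the same piece, say both in $U_j$. Applying $f$ and using $D(f(x_i),f^{i+1}(x_0))\le D(x_i,f^i(x_0))\le\epsilon'$ — wait, more carefully: I only know the two points are in the same clopen set of $D$-diameter $\le\epsilon'$, so $D(f^i(x_0),x_i)\le\epsilon'$, hence $D(f^{i+1}(x_0),f(x_i))\le\epsilon'$ as well since $f$ does not increase $D$. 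Combining with $d(f(x_i),x_{i+1})\le\delta$, which gives $D(f(x_i),x_{i+1})<\delta_0$, I get that $f^{i+1}(x_0)$ and $x_{i+1}$ are both within $D$-distance $\epsilon'+\delta_0$ of nearby points; this alone does not close the induction cleanly.

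The cleaner fix, and the step I expect to be the main (minor) obstacle, is to choose the clopen cover and $\delta$ more symbiotically: take $\epsilon'$ as above, let $\{U_j\}$ be a disjoint clopen cover of $D$-mesh $<\epsilon'$, and let $\delta$ be chosen (via the $d$–$D$ equivalence) so small that $d(a,b)\le\delta$ forces $a,b$ to lie in the same $U_j$. Now the induction runs: if $f^i(x_0),x_i\in U_{j}$ then since $f$ is $D$-nonincreasing and $\operatorname{diam}_D U_j<\epsilon'$, the points $f^{i+1}(x_0)$ and $f(x_i)$ satisfy $D(f^{i+1}(x_0),f(x_i))<\epsilon'$; but I need them in a common piece, not just $\epsilon'$-close. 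To force that, refine the cover so that the $U_j$ are $f$-compatible, i.e.\ each $f(U_j)$ is contained in a single piece — this is possible because $f$ is continuous and the pieces are clopen, after passing to a common refinement. With such a cover, $f(x_i)$ and $f^{i+1}(x_0)$ lie in the same piece as each other, and $x_{i+1}$ lies in the same piece as $f(x_i)$ by the choice of $\delta$; transitivity gives $x_{i+1}$ and $f^{i+1}(x_0)$ in a common piece, closing the induction. Then $d(x_i,f^i(x_0))\le\epsilon$ for all $i$, as required.
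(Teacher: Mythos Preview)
Your plan is sound in outline, but the step you flag as the main obstacle really is one, and your proposed resolution does not work as stated. Passing to the common refinement $\mathcal{P}\vee f^{-1}\mathcal{P}$ only guarantees that each refined piece maps into a piece of the \emph{original} $\mathcal{P}$, not into a piece of the refinement itself; iterating leads to $\bigvee_{n\ge0}f^{-n}\mathcal{P}$, and mere continuity of $f$ gives no reason for this to be finite. The claim is nevertheless true, and equicontinuity is precisely what rescues it: working in $D$ with $D(f(x),f(y))\le D(x,y)$, take a clopen partition $\mathcal{P}$ of $D$-mesh $<\epsilon'$ and let $\eta>0$ be the minimum $D$-distance between distinct pieces. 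If $D(x,y)<\eta$ then $D(f^n(x),f^n(y))<\eta$ for all $n\ge0$, so $f^n(x)$ and $f^n(y)$ share a piece of $\mathcal{P}$ for every $n$. Hence the classes of $\bigvee_{n\ge0}f^{-n}\mathcal{P}$ are open and therefore finite in number; this is a finite clopen $f$-compatible refinement of $\mathcal{P}$, and with it your induction runs as written.

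The paper takes a different route that sidesteps $f$-compatibility entirely. With an arbitrary clopen partition $\{A_k\}$ of mesh $\le\epsilon$ and $\delta$ below the minimum inter-piece distance, it proves by induction the stronger statement $(P_i)$: $d\bigl(f^{i-j}(x_j),\,f^{i-j-1}(x_{j+1})\bigr)\le\delta$ for every $0\le j\le i-1$. The inductive step is immediate from $d(f(a),f(b))\le d(a,b)$ together with $d(f(x_i),x_{i+1})\le\delta$. Granting $(P_i)$, the entire chain $f^i(x_0),f^{i-1}(x_1),\dots,f(x_{i-1}),x_i$ has consecutive terms within $\delta$ and hence lies in a single $A_k$, giving $d(x_i,f^i(x_0))\le\epsilon$. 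So your approach frontloads the work into constructing a special partition and then runs a one-line induction, while the paper keeps the partition arbitrary but carries a richer inductive hypothesis; both ultimately hinge on the nonincreasing metric at the same point.
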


\begin{proof}
We can assume that $d(f(x),f(y))\le d(x,y)$ for all $x,y\in X$. Since ${\rm dim}\,X=0$, for any given $\epsilon>0$, we can find a decomposition of $X$ into a disjoint union of clopen subsets $A_1,\dots,A_m\subset X$ such that ${\rm diam}\,A_k\le\epsilon$ for every $1\le k\le m$. Take $0<\delta<\min_{1\le k<l\le m}d(A_k,A_l)$ and suppose that  $(x_i)_{i\ge0}$ is a $\delta$-pseudo orbit  of $f$. We prove by induction on $i$ that the following property holds for every $i\ge 1$. 
\begin{equation*}
\text{$d(f^{i-j}(x_j),f^{i-j-1}(x_{j+1}))\le\delta$,\quad$0\le j\le i-1$} \tag{$P_i$}.
\end{equation*}
When $i=1$, $(P_1)$ is just $d(f(x_0),x_1)\le\delta$, which is obviously true. Suppose that $(P_i)$ holds for some $i\ge1$. Then, we have
\begin{equation*}
d(f^{i+1-j}(x_j),f^{i-j}(x_{j+1}))\le d(f^{i-j}(x_j),f^{i-j-1}(x_{j+1}))\le\delta,\quad0\le j\le i-1 
\end{equation*}
and $d(f(x_i),x_{i+1})\le\delta$, which implies that $(P_{i+1})$ holds, and so completes the induction. Now, for every $i\ge1$, from $(P_i)$ and the choice of $\delta$, it follows that \[\{f^{i-j}(x_j)\colon0\le j\le i\}\subset A_{k(i)}\] for some $1\le k(i)\le m$, especially $f^i(x_0),x_i\in A_{k(i)}$. Since ${\rm diam}\,A_{k(i)}\le\epsilon$, we have $d(x_i,f^i(x_0))\le\epsilon$ for each $i\ge1$, and this proves the lemma. 
\end{proof}

Then, let us prove Lemma 3.1.

\begin{proof}[Proof of Lemma 3.1]
As remarked above it remains to show that $(3)\Rightarrow(1)$. We can assume that $d(f(x),f(y))=d(x,y)$ for all $x,y\in X$. Let us suppose that $(x_i)_{i\ge0}$ is a limit pseudo orbit of $f$ and prove that  $(x_i)_{i\ge0}$ has a limit shadowing point. Since $\lim_{i\to\infty}d(f(x_i),x_{i+1})=0$, by Lemma 3.2, we can take a sequence of integers $0\le i_1<i_2<\cdots$ such that for each $n\ge1$, we have 
\begin{equation*}
d(x_{i_n+j},f^j(x_{i_n}))\le2^{-n},\:\:\text{for every}\:\:j\ge0 \tag{a}.
\end{equation*}
Put $y_n=f^{-i_n}(x_{i_n})$ for each $n\ge1$. Then, we have
\begin{eqnarray*}
d(y_n,y_{n+1})&=&d(f^{-i_n}(x_{i_n}),f^{-i_{n+1}}(x_{i_{n+1}})) \\
                 &=&d(f^{i_{n+1}-i_n}(x_{i_n}),x_{i_{n+1}})\\
                 &\le&2^{-n}\quad\text{(by taking $j=i_{n+1}-i_n$ in (a))} 
\end{eqnarray*}
for every $n\ge1$. This implies that $(y_n)_{n\ge1}$ is a Cauchy sequence of points in $X$, and thus there is $y\in X$ such that $\lim_{n\to\infty}y_n=y$. We have
\begin{equation*}
d(y_n,y)\le2^{-n}+2^{-n-1}+2^{-n-2}+\cdots=2^{-n+1} \tag{b}
\end{equation*}
for any $n\ge1$. It follows that
\begin{eqnarray*}
d(x_{i_n+j},f^{i_n+j}(y))&\le&d(x_{i_n+j},f^{i_n+j}(y_n))+d(f^{i_n+j}(y_n),f^{i_n+j}(y)) \\
                           &=&d(x_{i_n+j},f^j(x_{i_n}))+d(y_n,y) \\
                           &\le&2^{-n}+2^{-n+1}=3\cdot2^{-n}\quad\text{(by (a) and (b))}
\end{eqnarray*}
for all $n\ge1$ and $j\ge0$. Thus, we have $\lim_{i\to\infty}d(x_i,f^i(y))=0$, proving the lemma.
\end{proof}

A few more simple lemmas are needed for the proof of Theorem 1.2. By analogy with the notion of an $\omega$-limit set of a point in a dynamical systems, we define an $\omega$-limit set for any sequence of points in $X$. For a sequence of points $\gamma=(x_i)_{i\ge0}$ in $X$, we denote by $\omega(\gamma)$ the set of points $x\in X$ for which there is an increasing sequence of integers $0\le i_1<i_2<\cdots$ such that $\lim_{j\to\infty}x_{i_j}=x$. By the compactness of $X$, we can easily see that $\lim_{i\to\infty}d(x_i,\omega(\gamma))=0$ for any $\gamma=(x_i)_{i\ge0}$.

\begin{lem}
Let $f\colon X\to X$ be a continuous map and let $\gamma=(x_i)_{i\ge0}$ be a limit pseudo orbit of $f$. Then, we have $\omega(\gamma)\subset CR(f)$ and especially $\lim_{i\to\infty}d(x_i,CR(f))=0$.
\end{lem}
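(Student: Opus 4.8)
The plan is to establish the inclusion $\omega(\gamma)\subset CR(f)$ first; the displayed limit is then immediate, since it was already observed above that $\lim_{i\to\infty}d(x_i,\omega(\gamma))=0$ by compactness of $X$, and $\omega(\gamma)\subset CR(f)$ forces $d(x_i,CR(f))\le d(x_i,\omega(\gamma))$ for every $i\ge0$.

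To prove $\omega(\gamma)\subset CR(f)$, I would fix $x\in\omega(\gamma)$ and $\delta>0$ and construct a $\delta$-cycle of $f$ from $x$ to $x$. First, by continuity of $f$ at $x$, choose $\eta>0$ so that $d(a,x)\le\eta$ implies $d(f(a),f(x))\le\delta/3$. Second, since $(x_i)_{i\ge0}$ is a limit pseudo orbit, fix $N$ with $d(f(x_i),x_{i+1})\le\delta/3$ for all $i\ge N$. Third, using $x\in\omega(\gamma)$, pick $i_1\ge N$ with $d(x_{i_1},x)\le\eta$, and then pick $m>i_1+1$ with $d(x_m,x)\le\delta/3$, and set $i_2=m-1$, so that $i_2>i_1\ge N$ and $x_{i_2+1}=x_m$. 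The claim is then that the sequence $(z_j)_{j=0}^{k}$ with $k=i_2-i_1+1$ defined by $z_0=x$, $z_j=x_{i_1+j}$ for $1\le j\le i_2-i_1$, and $z_k=x$ is a $\delta$-cycle of $f$: the opening step gives $d(f(z_0),z_1)\le d(f(x),f(x_{i_1}))+d(f(x_{i_1}),x_{i_1+1})\le\delta/3+\delta/3<\delta$; each interior step gives $d(f(z_j),z_{j+1})=d(f(x_{i_1+j}),x_{i_1+j+1})\le\delta/3$ by the choice of $N$; and the closing step gives $d(f(z_{k-1}),z_k)=d(f(x_{i_2}),x)\le d(f(x_{i_2}),x_{i_2+1})+d(x_m,x)\le\delta/3+\delta/3<\delta$. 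Since $\delta>0$ is arbitrary, this yields $x\in CR(f)$.

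The one point that requires care is closing the cycle: $d(f(x_{i_2}),x)$ is not controlled merely by $x_{i_2}$ sitting near $x$, so the trick is to choose the index $i_2$ in such a way that its \emph{successor} $x_{i_2+1}$ is the iterate lying near $x$, after which the one-step error bound completes the estimate. Beyond that, the argument is the standard observation that an asymptotic pseudo orbit can accumulate only on the chain recurrent set. Once $\omega(\gamma)\subset CR(f)$ is in place, the final assertion $\lim_{i\to\infty}d(x_i,CR(f))=0$ follows from the inequality recorded in the first paragraph.
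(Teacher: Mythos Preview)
Your proof is correct and follows essentially the same route as the paper's: both fix $x\in\omega(\gamma)$, use continuity of $f$ at $x$ together with the limit-pseudo-orbit condition to splice a segment $(x_{I+1},\dots,x_{J-1})$ of $\gamma$ between two copies of $x$, and check the opening, interior, and closing steps by the same triangle-inequality estimates (the paper uses $\delta/2$ where you use $\delta/3$, and writes $J$ for your $m$). Your remark about choosing $i_2$ so that its \emph{successor} lies near $x$ is exactly what the paper does implicitly by taking the last interior point to be $x_{J-1}$; there is no substantive difference.
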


\begin{proof}
Let $x\in\omega(\gamma)$ and let us show that $x\in CR(f)$. Given $\delta>0$, take $0<\epsilon\le\delta/2$ such that $d(a,b)\le\epsilon$ implies $d(f(a),f(b))\le\delta/2$ for all $a,b\in X$. Since $\gamma$ is a limit pseudo orbit of $f$, there is an integer $n>0$ such that $d(f(x_i),x_{i+1})\le\delta/2$ for any $i\ge n$. For such $n$, take $J>I\ge n$ so large that $d(x_I,x)\le\epsilon$ and $d(x_J,x)\le\epsilon$. Then, we consider a cycle $\beta=(x,x_{I+1},x_{I+2},\dots,x_{J-1},x)$. By the choice of $\epsilon$, $n$, $I$, and $J$, we have
\begin{equation*}
d(f(x_i),x_{i+1})\le\delta/2\le\delta
\end{equation*}
for every $I+1\le i\le J-2$,
\begin{equation*}
d(f(x),x_{I+1})\le d(f(x),f(x_I))+d(f(x_I),x_{I+1})\le\delta/2+\delta/2\le\delta,
\end{equation*}
and
\begin{equation*}
d(f(x_{J-1}),x)\le d(f(x_{J-1}),x_J)+d(x_J,x)\le\delta/2+\epsilon\le\delta,
\end{equation*}
implying that $\beta$ is a $\delta$-cycle of $f$. Since $\delta>0$ is arbitrary, we conclude that $x\in CR(f)$.
\end{proof} 

\begin{lem}
Let $f\colon X\to X$ be a continuous map. If $f$ has the limit shadowing property around $CR(f)$, then $f$ has the limit shadowing property.
\end{lem}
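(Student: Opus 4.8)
The plan is to use Lemma 3.3 to replace an arbitrary limit pseudo orbit by a nearby limit pseudo orbit contained in $CR(f)$, apply the hypothesis to the latter, and then transfer the asymptotic shadowing back by the triangle inequality. So let $\gamma=(x_i)_{i\ge0}$ be any limit pseudo orbit of $f$. Since $CR(f)$ is closed, hence compact, for each $i\ge0$ I can choose $x_i'\in CR(f)$ with $d(x_i,x_i')=d(x_i,CR(f))$, and by Lemma 3.3 we have $\lim_{i\to\infty}d(x_i,x_i')=\lim_{i\to\infty}d(x_i,CR(f))=0$.

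Next I would verify that $\gamma'=(x_i')_{i\ge0}$ is a limit pseudo orbit of $f$ contained in $CR(f)$. The containment is immediate from the construction. For the pseudo orbit condition I would use uniform continuity of $f$ on the compact space $X$ together with the estimate
\[
d(f(x_i'),x_{i+1}')\le d(f(x_i'),f(x_i))+d(f(x_i),x_{i+1})+d(x_{i+1},x_{i+1}'),
\]
where all three terms tend to $0$ as $i\to\infty$: the first because $d(x_i',x_i)\to0$ and $f$ is uniformly continuous, the second because $\gamma$ is a limit pseudo orbit, and the third because $d(x_{i+1},x_{i+1}')\to0$. Hence $\lim_{i\to\infty}d(f(x_i'),x_{i+1}')=0$, so $\gamma'$ is indeed a limit pseudo orbit of $f$ contained in $CR(f)$.

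Finally, since $f$ has the limit shadowing property around $CR(f)$, the limit pseudo orbit $\gamma'$ has a limit shadowing point $y\in X$, i.e. $\lim_{i\to\infty}d(x_i',f^i(y))=0$. Then
\[
d(x_i,f^i(y))\le d(x_i,x_i')+d(x_i',f^i(y))\to0,
\]
so $y$ is a limit shadowing point of $\gamma$. As $\gamma$ was arbitrary, $f$ has the limit shadowing property. I do not expect a serious obstacle in this argument; the only points needing (routine) care are the closedness of $CR(f)$ and the uniform continuity of $f$ used to control the term $d(f(x_i'),f(x_i))$.
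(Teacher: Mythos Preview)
Your proof is correct and essentially identical to the paper's own proof of this lemma: both use Lemma~3.3 to approximate the given limit pseudo orbit by one in $CR(f)$ obtained via nearest-point projection, verify it is a limit pseudo orbit by the same three-term triangle inequality, and then transfer the limit shadowing point back. The only cosmetic difference is notation ($x_i'$ versus $y_i$).
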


\begin{proof}
Let $\gamma=(x_i)_{i\ge0}$ be a limit pseudo orbit of $f$. By Lemma 3.3, we have
\begin{equation*}
\lim_{i\to\infty}d(x_i,CR(f))=0.
\end{equation*}
For each $i\ge0$, take $y_i\in CR(f)$ such that $d(x_i,y_i)=d(x_i,CR(f))$ (we can do that since $CR(f)$ is closed). Then, since
\begin{equation*}
d(f(y_i),y_{i+1})\le d(f(y_i),f(x_i))+d(f(x_i),x_{i+1})+d(x_{i+1},y_{i+1})
\end{equation*}
for every $i\ge0$, and each term of the right-hand side tends to $0$ as $i$ tends to $\infty$, we have $\lim_{i\to\infty}d(f(y_i),y_{i+1})=0$, i.e., $(y_i)_{i\ge0}\subset CR(f)$ is a limit pseudo orbit of $f$. Hence, there exists $y\in X$ such that $\lim_{i\to\infty}d(y_i,f^i(y))=0$, and then by
\begin{equation*}
d(x_i,f^i(y))\le d(x_i,y_i)+d(y_i,f^i(y)),
\end{equation*}
we see that $\lim_{i\to\infty}d(x_i,f^i(y))=0$. Thus, $y$ gives a desired limit shadowing point of $\gamma$.
\end{proof} 

\begin{rem}
\normalfont
Given a continuous map $f\colon X\to X$, it is obvious from the definition that if $f|_{CR(f)}$ has the limit shadowing property, then $f$ has the limit shadowing property around $CR(f)$, so by Lemma 3.4, $f$ has the limit shadowing property.
\end{rem}

The next lemma on equicontinuous maps is proved in \cite{Ma}.

\begin{lem}\cite{Ma}
Let $f\colon X\to X$ be an equicontinuous map. Then, the following properties hold
\begin{itemize}
\item[(1)] $\Omega(f)=CR(f)=\bigcap_{n\ge1}f^n(X)$,
\item[(2)] $f|_{\Omega(f)}$ is an equicontinuous homeomorphism.
\end{itemize} 
\end{lem}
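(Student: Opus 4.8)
The plan is to set $Y=\bigcap_{n\ge1}f^n(X)$ and to establish the cyclic chain of inclusions
\[
Y\subset\Omega(f)\subset CR(f)\subset Y,
\]
which yields (1); part (2) then comes as a byproduct. Throughout I would assume, after replacing $d$ by the equivalent metric $D(x,y)=\sup_{n\ge0}d(f^n(x),f^n(y))$ recalled in Section~1, that $f$ is non-expanding, $d(f(x),f(y))\le d(x,y)$, exactly as in the proof of Lemma~3.2.

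First I would record the elementary properties of $Y$. As a decreasing intersection of nonempty compacta, $Y$ is nonempty and compact, and $f(Y)=Y$: the inclusion $f(Y)\subset Y$ is clear, while for $y\in Y$ the sets $f^{-1}(y)\cap f^n(X)$ are nonempty (since $y\in f^{n+1}(X)=f(f^n(X))$), compact, and nested, so $f^{-1}(y)\cap Y\ne\emptyset$. Hence $f|_Y\colon Y\to Y$ is a surjective equicontinuous map, and therefore an equicontinuous homeomorphism by the fact recalled in Section~1 (cf.\ \cite{AG, Ma}); in particular there is a metric equivalent to $d$ on $Y$ with respect to which $f|_Y$ is an isometry. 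This already gives (2) once $\Omega(f)=Y$ is known.

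Next comes $Y\subset\Omega(f)$. Since $f|_Y$ is a bijective isometry of the compact space $Y$, for every $y\in Y$ the orbit $(f^n(y))_{n\ge0}$ has two terms $f^p(y),f^q(y)$ with $p<q$ that are arbitrarily close, and applying the isometry $(f|_Y)^{-p}$ makes $f^{q-p}(y)$ arbitrarily close to $y$; thus every neighborhood $U$ of $y$ satisfies $f^{k}(U)\cap U\ne\emptyset$ with $k=q-p\ge1$, so $y\in\Omega(f)$. (Alternatively one may simply invoke $Y=M(f|_Y)$, again recalled in Section~1, together with $M(f|_Y)\subset R(f)\subset\Omega(f)$.) Combined with the trivial inclusion $\Omega(f)\subset CR(f)$, only $CR(f)\subset Y$ remains, and this is where I expect the real work to lie. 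Fix $x\in CR(f)$ and $n\ge1$; I claim $d(x,f^n(X))\le n\delta$ for every $\delta>0$, which, since $f^n(X)$ is closed, forces $x\in f^n(X)$ and hence $x\in Y$. Given $\delta>0$, take a $\delta$-cycle at $x$ and concatenate it with itself sufficiently many times to obtain a $\delta$-chain $(x_i)_{i=0}^{K}$ of $f$ with $x_0=x_K=x$ and $K\ge n$ — this is precisely where chain recurrence is used, to make the chain as long as we please. Because $f$ is non-expanding, a backward induction on $j$ gives $d(f^{j}(x_{K-j}),x_K)\le j\delta$ for $0\le j\le K$, the inductive step using $d(f^{j+1}(x_{K-j-1}),f^{j}(x_{K-j}))\le d(f(x_{K-j-1}),x_{K-j})\le\delta$. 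Taking $j=n$ yields $d(f^{n}(x_{K-n}),x)\le n\delta$ with $f^{n}(x_{K-n})\in f^n(X)$, proving the claim. Hence $CR(f)\subset Y$, the cyclic inclusions close up, $\Omega(f)=CR(f)=\bigcap_{n\ge1}f^n(X)=Y$, and since $f|_Y$ was already seen to be an equicontinuous homeomorphism, (2) follows. The only other mildly delicate point is the verification $f(Y)=Y$, a routine compactness argument on preimages.
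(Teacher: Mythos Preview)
Your argument is correct, but there is nothing to compare it against: the paper does not prove Lemma~3.5 at all. It is stated with the citation \cite{Ma} and prefaced by ``The next lemma on equicontinuous maps is proved in \cite{Ma}'', so the paper simply imports the result from Mai's work without reproducing a proof.

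That said, your self-contained proof is sound. The cyclic inclusion $Y\subset\Omega(f)\subset CR(f)\subset Y$ is the natural way to establish (1), and each step checks out. The compactness argument for $f(Y)=Y$ is standard and correct. For $Y\subset\Omega(f)$, using that $f|_Y$ is a surjective equicontinuous map, hence an equicontinuous homeomorphism with $Y=M(f|_Y)$, is exactly the right move and already delivers (2). The only place requiring care is $CR(f)\subset Y$, and your backward induction under the non-expanding metric is clean: from $d(f^j(x_{K-j}),x_K)\le j\delta$ and $d(f^{j+1}(x_{K-j-1}),f^j(x_{K-j}))\le d(f(x_{K-j-1}),x_{K-j})\le\delta$ one gets the $(j{+}1)$-step, and taking $j=n$ with $\delta\to0$ traps $x$ in the closed set $f^n(X)$. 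Concatenating a $\delta$-cycle with itself to force $K\ge n$ is the correct use of chain recurrence. In short, you have supplied a complete proof where the paper offers only a reference.
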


Now, we give a proof of Theorem 1.2.

\begin{proof}[Proof of Theorem 1.2]
$(2)\Rightarrow(1)$: It is well-known that if $f$ has the shadowing property, then so does $f|_{\Omega(f)}$ (see \cite[Lemma 1]{M}). Since $f|_{\Omega(f)}$ is an equicontinuous homeomorphism by Lemma 3.5 (2), we use Lemma 3.1 to conclude that $f|_{\Omega(f)}$ has the limit shadowing property. Note that we have $CR(f)=\Omega(f)$ as a consequence of the shadowing property of $f$ or Lemma 3.5 (1). Thus, by Lemma 3.4, $f$ has the limit shadowing property.  
$ $\newline
$(1)\Rightarrow(3)$: By Theorem 1.1, if $f$ has the limit shadowing property, then $f|_{\Omega(f)}$ has the shadowing property. Since $f|_{\Omega(f)}$ is an equicontinuous homeomorphism by Lemma 3.5 (2), we have that $\dim\,\Omega(f)=0$ by Lemma 3.1.
$ $\newline
$(3)\Rightarrow(2)$: It suffices to prove that every $x\in X$ is a chain continuity point for $f$. Here, for any given $x\in X$, following \cite{A}, we define $C(x)\subset X$ by
\begin{equation*}
C(x)=\{y\in X\colon\text{$\forall\delta>0$ $\forall n\ge 1$ $\exists$ $\delta$-chain $(x_i)_{i=0}^k$ of $f$ s.t. $k\ge n$, $x_0=x$ and $x_k=y$}\}.
\end{equation*}
Since $f$ is equicontinuous, by \cite{A}, if $\dim\,C(x)=0$, then $x$ is a chain continuity point for $f$. Note that we have $C(x)\subset\bigcap_{n\ge1}f^n(X)$ and $\bigcap_{n\ge1}f^n(X)=\Omega(f)$ by Lemma 3.5 (1). Thus, $\dim\,\Omega(f)=0$ implies $\dim\,C(x)=0$, and so every $x\in X$ is a chain continuity point for $f$.
\end{proof}

As the final proof of this section, we prove Corollary 1.3.

\begin{proof}[Proof of Corollary 1.3]
Item (1) is an immediate consequence of Theorem 1.1 and Proposition 1.1. As for (2), by Corollary 1.2, the limit shadowing property implies the thick shadowing property for every continuous map. Conversely, suppose that a c-expansive or equicontinuous map $f\colon X\to X$ satisfies the thick shadowing property. Then, by \cite[Theorem 4.5]{O2}, $f|_{CR(f)}$ has the shadowing property. From Proposition 1.1,  it follows that $f|_{CR(f)}$ has the limit shadowing property, which combined with Lemma 3.4 proves that $f$ has the limit shadowing property.
\end{proof}

\section{Example: a complement to Theorem 1.2}

In this section, we give an example of equicontinuous map showing that the s-limit shadowing property is not necessarily equivalent to the three properties in Theorem 1.2. First, we recall the definition of the s-limit shadowing property. A continuous map $f\colon X\to X$ is said to have the {\em s-limit shadowing property} if for any $\epsilon>0$, there exists $\delta>0$ satisfying the following properties
\begin{itemize}
\item[(1)] Every $\delta$-pseudo orbit of $f$ is $\epsilon$-shadowed by some point of $X$,
\item[(2)] For every $\delta$-pseudo orbit $(x_i)_{i\ge 0}$ which is also a limit pseudo orbit of $f$, there is an $\epsilon$-shadowing point $x\in X$ which is also a limit shadowing point of  $(x_i)_{i\ge 0}$. 
\end{itemize}

Our example is a simple modification of an odometer, so we shall recall its definition. An {\em odometer} (also called an {\em adding machine}) is defined as follows. Given a strictly increasing sequence $m=(m_k)_{k\ge1}$ of positive integers such that $m_1\ge2$ and $m_k$ divides $m_{k+1}$ for each $k=1,2,\dots$, we define
\begin{itemize}
\item $X(k)=\{0,1,\dots,m_k-1\}$ (with the discrete topology),
\item $X_m=\{(x_k)_{k\ge1}\in\prod_{k\ge1}X(k)\colon x_k\equiv x_{k+1}\pmod{m_k}\colon\forall k\ge1\}$,
\item $g(x)_k=x_k+1\pmod{m_k}$ for all $x=(x_k)_{k\ge1}\in X_m$, $k\ge1$.
\end{itemize}
The set $X_m$ has the subspace topology induced by the product topology on $\prod_{k\ge1}X(k)$, and the resulting dynamical system $(X_m,g)$ is called an odometer with the periodic structure $m$. It is immediate from the definition that $g\colon X_m\to X_m$ is an equicontinuous homeomorphism, and in fact, the odometers are characterized as the minimal equicontinuous systems on Cantor spaces (see \cite{Ku}).

\begin{ex}
\normalfont
Let $g\colon X_m\to X_m$ be an odometer. We define a metric $d$ on $X_m$ by
\begin{equation*}
d(x,y)=\sup_{k\ge1}2^{-k}\delta(x_k,y_k)
\end{equation*}
for $x=(x_k)_{k\ge1}, y=(y_k)_{k\ge1}\in X_m$, where $\delta(a,b)=0$ if $a=b$ and $\delta(a,b)=1$ otherwise. Then, let $X=\{p\}\sqcup X_m$ be the disjoint union of a one point set $\{p\}$ and $X_m$. We can extend the metric $d$ to on $X$ and assume that $d(p,x)>1$ for every $x\in X_m$. Put $q=(0,0,\dots)\in X_m$ and define $f\colon X\to X$ by $f(p)=q$ and $f(x)=g(x)$ for every $x\in X_m$. It is clear that $f$ is equicontinuous since $g$ is so, and ${\rm dim}\,\Omega(f)={\rm dim}\,X_m=0$, hence by Theorem 1.2, $f$ has the shadowing property. Let us show that $f$ does not satisfy the s-limit shadowing property. For the purpose, it is sufficient to prove that for any $\delta>0$, there is a $\delta$-pseudo and limit pseudo orbit $\gamma=\gamma(\delta)$ of $f$ such that every 1-shadowing point of it is not a limit shadowing point, so we shall construct such $\gamma$.
  
First, for any given $\delta>0$, we take an integer $K\ge2$ with $2^{-K}\le\delta$ and put
\begin{equation*}
C_j=\{(x_k)_{k\ge1}\in X_m\colon x_K=j\}\subset X_m
\end{equation*}
for each $j\in\{0,1,\dots,m_K-1\}$. Note that $q\in C_0$. It is clear that $X_m=\bigsqcup_{j=0}^{m_K-1}C_j$ is a clopen partition of $X_m$, and we have $f(C_j)=C_{j+1}\pmod{m_K}$ for every $j\in\{0,1,\dots,m_K-1\}$. Put $r=f^{m_{K-1}}(q)=(0,0,\dots,0,m_{K-1},m_{K-1},\dots)\in C_{m_{K-1}}$ and note that $d(f(p),r)=d(q,r)=2^{-K}\le\delta$. Then, the following pseudo orbit: 
\begin{equation*}
\gamma=(x_i)_{i\ge0}=(p,r,f(r),f^2(r),f^3(r),\dots),
\end{equation*}
is a $\delta$-pseudo and limit pseudo orbit of $f$. Now, suppose that $x\in X$ is a $1$-shadowing point of $\gamma$. Since $d(p,x)=d(x_0,x)\le1$, by the property of the metric $d$, we have $x=p$. However, for every integer $j\ge1$, we have
\begin{equation*}
x_j=f^{j-1}(r)\in C_{m_{K-1}+j-1}\pmod{m_K}
\end{equation*}
and
\begin{equation*}
f^j(p)=f^{j-1}(q)\in C_{j-1}\pmod{m_K}.
\end{equation*}
From $m_{K-1}+j-1\not\equiv j-1\pmod{m_K}$, it follows that
\begin{equation*}
d(x_j,f^j(p))\ge 2^{-K}
\end{equation*}
for every $j\ge1$, which implies that $p=x$ is not a limit shadowing point of $\gamma$. Thus, $f$ does not satisfy the s-limit shadowing property.  
\end{ex}

\section*{Acknowledgements}

The author would like to thank the referee for helpful comments and suggestions.

\end{document}